\theoremstyle{plain}
\newtheorem{theorem}{Theorem}
\newtheorem{lemma}{Lemma}
\newtheorem{corollary}{Corollary}
\newtheorem{proposition}{Proposition}
\theoremstyle{definition}
\newtheorem{definition}{Definition}
\theoremstyle{remark}
\newtheorem{remark}{Remark}
\newtheorem{example}{Example}
\begin{document}

\title[The wavelet transforms in Gelfand-Shilov spaces]
{The wavelet transforms in Gelfand-Shilov spaces}

\author[S. Pilipovi\'{c}]{Stevan Pilipovi\'{c}}
\address{S. Pilipovi\'{c}\\ University of Novi Sad, Faculty of Sciences\\ Department
 of Mathematics and Informatics\\ Trg Dositeja Obradovi\' ca 4\\ 21000 Novi Sad \\ Serbia }
\email{pilipovics@yahoo.com}

\author[D. Raki\'{c}]{Du\v{s}an Raki\'{c}}
\address{D. Raki\'{c}\\ University of Novi Sad\\ Faculty of Technology\\ Bul. cara Lazara 1\\ 21000 Novi Sad \\ Serbia}
\email{drakic@tf.uns.ac.rs}

\author[N. Teofanov]{Nenad Teofanov}
\address{N. Teofanov\\ University of Novi Sad, Faculty of Sciences\\ Department
 of Mathematics and Informatics\\ Trg Dositeja Obradovi\' ca 4\\ 21000 Novi Sad \\ Serbia }
\email{nenad.teofanov@dmi.uns.ac.rs}

\author[J. Vindas]{Jasson Vindas}
\address{J. Vindas\\ Department of Mathematics\\ Ghent University\\ Krijgslaan 281 Gebouw S22\\ B 9000 Gent\\ Belgium}
\email{jvindas@cage.UGent.be}

\subjclass[2010]{42C40, 46F05, 46F12}
\keywords{Wavelet transform, Gelfand-Shilov spaces, ultradistributions, Calder\'{o}n reproducing formula}

\begin{abstract}
We describe local and global properties of wavelet transforms of
ultra-differentiable functions. The results are given in the form of continuity properties of the wavelet transform on Gelfand-Shilov type spaces
and their dual spaces. In particular, we introduce
a new family of highly time-scale localized spaces on the upper half-space. We study
the wavelet synthesis operator (the left-inverse of the wavelet transform) and obtain the
resolution of identity (Calder\'{o}n reproducing formula) in the context of ultradistributions.
\end{abstract}

\maketitle

\section{Introduction}

One of the most useful concepts in time-frequency analysis for signal analysts and engineers
is the wavelet series expansion of a signal.
The coefficients in such series, representing the discrete version of a signal,
are then used in the signal analysis, processing and synthesis.
The continuous versions
of these discrete representations lead to the wavelet (analysis) transform
$ \mathcal{W}_{\psi}$ and the wavelet synthesis operator $\mathcal{M}_{\phi}$ \cite{hol1}.
The authors have
studied both transforms in several papers, \cite{PRV,vindas-pilipovic,vindas-pilipovic-2,RT,VPR}.
Although the continuous transforms are less popular in the literature than their
discrete counterparts, studying the intrinsic properties of the continuous wavelet transform is also
a very important subject. In particular, continuous transforms may potentially serve well in the study
of microlocal and pointwise aspects of a signal, cf. \cite{FFV,hol99,jaffard-m,PVul}.
Microlocal aspects have also been recently studied by different authors via shearlet transforms, see e.g.
\cite{DK,KL}. An interesting alternative approach to the wavelet transform
in several variables with applications in microlocal analysis is performed in \cite{FFV}.

It is well known that smooth orthonormal wavelets cannot have
exponential decay, cf. \cite{Dau,DzH,HW}. In this paper we study the
wavelet transform defined by wavelets with almost exponential decay.
In this context it is then natural to work with Gelfand-Shilov spaces
as a functional-analytic groundwork. We shall prove continuity theorems
for the wavelet transform and the wavelet synthesis operator on spaces of
Gelfand-Shilov type, see Section \ref{Sec1} for definitions. In
contrast to known results \cite{hol1,pa06,PRV,RT}, we introduce in the article
a new family of (semi-)norms with additional parameters in the corresponding wavelet image space. These parameters measure fast decay or growth orders of the wavelet
transform and the wavelet synthesis operator. Roughly speaking, our considerations are able to detect Gevrey ultra-differentiability properties (such as analyticity) via appropriate decay of the wavelet transform.

\par

Gelfand-Shilov spaces of ultra-differentiable functions
were originally introduced in \cite{GS} as a tool to treat existence and uniqueness questions
for parabolic initial-value problems. Such spaces, consisting of Gevrey
ultra-differentiable functions, are also very useful in hyperbolic and
weak hyperbolic problems, see \cite{BL,GaRu,R} and the references therein.
Exponential decay and holomorphic extension of solutions to globally elliptic equations
in terms of Gelfand-Shilov spaces have been recently studied in \cite{CGR-1,CGR-2}, see also \cite{BL}.
We refer to \cite{NR} for an overview of results in this direction
and for applications in quantum mechanics and traveling waves.
On the other hand, in the context of time-frequency analysis, the Gelfand-Shilov spaces
have recently captured much attention in connection with modulation spaces
\cite{GZ}, localization operators \cite{CPRT},
and the corresponding pseudodifferential calculus \cite{Prangoski,Toft-1,Toft-2}.
We follow here Komatsu's approach \cite{K} to spaces of
ultra-differentiable functions. Another widely used approach is that of Braun, Meise, Taylor, Vogt and their
collaborators, see e.g. \cite{BMT} and the recent contribution \cite{RS}.
These two approaches are equivalent in many interesting situations,
cf. \cite{LO} for more details.

\par

We remark that the wavelet transform in the context of
Gelfand-Shilov spaces was already studied in \cite{pa06,pa09} in
dimension $n=1$. In the present article we propose and develop an
intrinsically different approach, which also covers the
multidimensional case. We employ here wavelets with all vanishing
moments. The advantage of this condition is that one is able to
translate ultra-differentiability and subexponential decay of
functions into sharper localization properties in the scale variable
of the wavelet transform. Our approach also provides  the resolution of the identity (Calder\'{o}n
reproducing formula) for ultradistributions. As a matter of fact,
this inversion formula for the wavelet transform of
ultradistributions seems to be out of reach of the results from
\cite{pa06,pa09}.

We point out that the number of vanishing moments (called {\em
cancellations} in \cite{M}) of a wavelet $\psi$ is intimately
related to {\em the order of approximation} of the corresponding
wavelet series via the so-called Strang-Fix condition. In
particular, wavelets with many vanishing moments are appropriate
when dealing with objects which are very regular except for a few
isolated singularities, cf. \cite{Dau,Mallat}. It is also well known
\cite{HW} that as soon as an orthogonal wavelet belongs to the
Schwartz class $\mathcal{S}(\mathbb{R}^{n})$ then all its moments
must vanish. In \cite{hol1} wavelets with all vanishing moments were
used to develop a distributional framework for the wavelet transform
in the context of Lizorkin spaces. Here we shall develop an new ultradistributional framework.

\par

The paper is organized as follows. In Section \ref{Sec1} we explain some facts about Gelfand-Shilov type spaces. In particular, we introduce a new four-parameter family $
\mathcal{S}^{s}_{t, \tau_1, \tau_2} (\mathbb{H}^{n+1}) $ of function spaces on the upper-half space and study its properties (see Subsection \ref{G-S upper half-space}). Section \ref{Sec2} contains our main continuity results, Theorems \ref{th3} and \ref{th4}, which imply Calder\'{o}n reproducing formulas for ultradistributions (Theorem \ref{th-res-identity} and Corollary \ref{desingularization ultra}). Finally, in Section \ref{Sec3} we collect the proofs of the main results.

\subsection{Notation and notions}

We denote by $\mathbb{H}^{n + 1} = \mathbb{R}^n \times \mathbb{R}_+ $ the upper half-space
and $\mathbb{N} = \{ 0,1,2, \dots \}$. The unit sphere in $ \mathbb{R}^n$ is denoted by $\mathbb{S}^{n-1}$.
When $ x, y \in \mathbb{R}^n $ and $ m \in
\mathbb{N}^n $, $|x|$ denotes the Euclidean norm, $ \langle x \rangle = (1+|x|^2)^{1/2}$,
$ xy = x_1 y_1 + x_2 y_2 + \dots + x_n y_n, $ $ x^{m} = x_1^{m_1} \dots x_n^{m_n},$
$m! = m_1! m_2! \dots m_n! $,
$\partial^{m}=\partial_x^{m} =
\partial_{x_1}^{m_1} \dots\partial_{x_n}^{m_n},$ and $ \triangle $ denotes the Laplacian: $
\triangle = \triangle_x = \partial_{x_1 ^2 }^{2} + \dots + \partial_{x_n ^2}^{2}.$
By a slight abuse of notation, the length of a multi-index
$ m \in \mathbb{N}^n $ is denoted by $ |m| = m_1 + \dots + m_n $, and
the meaning of $|\cdot| $ shall be clear from the context.
We denote by $C,  h, \dots$  positive constants which may be different in various occurrences;
$ A\lesssim B $ means that $ A \leq C \cdot B $ for
some positive constant $C$.
If $ A\lesssim B $  and  $ B\lesssim A $ we write $ A \asymp B $.
The dual pairing between a test function space $ {\mathcal A}$ and its dual space of (ultra)distributions
${\mathcal A'}$ is denoted by
$\langle\: \cdot\:, \:\cdot\: \rangle=_{\mathcal A'}\langle\: \cdot\:, \:\cdot \:\rangle_{\mathcal A} $.

When $ \alpha$ and $ \beta $ are multi-indices and $n$ is the space dimension,
we have
$$
|\alpha|! \leq n^{|\alpha|}\alpha !, \;\;\;
 \alpha! \beta! \leq  (\alpha +\beta)! \leq 2^{|\alpha|+|\beta|} \alpha ! \beta!.
$$

\par

\section{Gelfand-Shilov type spaces}  \label{Sec1}

In this section we discuss definitions and properties of the test function spaces
that will
be employed in our study of the wavelet transform.

\par

For the reader's convenience, and in order to be self-contained,
we first recall various spaces of rapidly decreasing functions that were considered in the context of wavelet transform
in e.g. \cite{hol1,PRV}.

\par

The moments of $\varphi\in\mathcal{S}(\mathbb{R}^{n})$,
the Schwartz space of rapidly decreasing smooth test functions,
are denoted by $\mu_{m}(\varphi)=\int_{\mathbb{R}^{n}} x^{m}\varphi(x)dx$,
$m\in\mathbb{N}^{n}$. We fix constants in the Fourier
transform as follows: $\hat{\varphi}(\xi) = \int_{\mathbb{R}^{n}}
\varphi(x) e^{-ix \cdot \xi} \mathrm{d}x,$ $\xi\in\mathbb{R}^{n}$.

\par

The Lizorkin space  $\mathcal{S}_0 (\mathbb{R}^{n}) = \left\{\varphi\in
\mathcal{S}(\mathbb{R}^{n}): \: \mu_m (\varphi) = 0,\ \forall m
\in \mathbb{N}^{n} \right\} $ is a closed subspace of
$\mathcal{S}(\mathbb{R}^{n})$ equipped with the relative
topology inhered from $\mathcal{S}(\mathbb{R}^{n})$, \cite{hol1,Samko}.

\par

The space $ \mathcal{S} (\mathbb {H}^{n+1}) $ of ``highly localized
functions over the half-space'' \cite{hol1} consists of  $ \Phi
\in C^{\infty} (\mathbb{H}^{n+1}) $ such that the seminorms
\begin{equation} \label{norma-hl}
p^{l,k} _{\alpha, \beta}(\Phi)=\sup_{(b,a)\in
\mathbb {H}^{n+1}}\left(a^{l}+\frac
{1}{a^{l}}\right)
\langle b \rangle ^k \,
\left| \partial^{\alpha} _{a} \partial^{\beta} _{b} \Phi (b,a)\right |
\end{equation}
are finite for all $l,k,\alpha \in \mathbb{N}$ and for all
$ \beta \in\mathbb{N}^{n}$.

\par

When $ (b,a) \in \mathbb {H}^{n+1}$ and $ k,l\in  \mathbb{N}$, then $ \left(a^{l}+\frac
{1}{a^{l}}\right) \asymp \left(a +\frac
{1}{a}\right)^{l} $ and
$ \langle b \rangle ^k \asymp | b | ^k $ when $|b|$ is large enough, see also
Remark \ref{remark equivalent sequences} below.

\par

We introduce spaces of Gelfand-Shilov type by analogy to
$\mathcal{S} (\mathbb{R}^{n})$,  $\mathcal{S}_0 (\mathbb{R}^{n})$ and
$ \mathcal{S} (\mathbb {H}^{n+1}) $. The family of spaces $ \mathcal{S}_{{\rho_2}}^{{\rho_1}} (\mathbb{R}^n)$
introduced by I. M. Gelfand and G. E. Shilov
in the study of Cauchy problems was systematically studied in
\cite{GS}, see \cite{NR} for a recent survey.

\par

Recall that $ \varphi \in
\mathcal{S}(\mathbb{R}^n) $ belongs to the  Gelfand-Shilov space $
\mathcal{S}_{{\rho_2}}^{{\rho_1}} (\mathbb{R}^n),$ $ {\rho_1}, {\rho_2} > 0,$ if  there exists  a constant $  h > 0 $
such that
$$
|x^{\alpha} \varphi^{(\beta)} (x)| \lesssim h^{-|\alpha + \beta|}
\, \alpha!^{{\rho_2}} \beta!^{{\rho_1}}, \; \; x \in \mathbb{R}^n, \,
\alpha, \beta \in \mathbb{N} ^n.
$$
The space $ \mathcal{S}_{{\rho_2}}^{{\rho_1}} (\mathbb{R}^n)$ is nontrivial if and only if $ {\rho_1} + {\rho_2} \geq 1 $. The family of
norms
\begin{equation} \label{norma-GS}
 p_h^{{\rho_1}, {\rho_2}} (\varphi) = \sup_{x \in \mathbb{R}^n, \alpha,
\beta \in \mathbb{N} ^n} \frac{h^{|\alpha + \beta|}}{\alpha!^{{\rho_2}}
\beta!^{{\rho_1}}} \, | x^{\alpha}\varphi^{(\beta)} (x)|, \;\: h > 0,
\end{equation}
defines the canonical inductive topology of $ \mathcal{S}_{{\rho_2}}^{{\rho_1}} (\mathbb{R}^n). $

It is well known \cite{chchki96} that $\varphi \in \mathcal{S}_{{\rho_2} } ^{{\rho_1}}
(\mathbb{R}^n) $ if and only if there exists $h>0$ such
that
$$
\sup_{x\in\mathbb{R}^{n}}  e^{h|x|^{1/{\rho_2}}} | \varphi (x)| <\infty \;\;\; \mbox{and}
\;\;\; \sup_{\xi\in\mathbb{R}^{n}}    e^{h|\xi|^{1/{\rho_1}}} | \hat \varphi (\xi)|<\infty.
$$
Hence, the Fourier transform is an isomorphism between
$ \mathcal{S}_{{\rho_2}}^{{\rho_1}} (\mathbb{R}^n) $
and $ \mathcal{S} ^{{\rho_2}} _{{\rho_1}} (\mathbb{R}^n). $

\par

The space $ \mathcal{S}_{{\rho_2}}^{{\rho_1}} (\mathbb{R}^n) $ is non-quasianalytic, namely, it contains compactly supported functions, if and only if $\rho_1>1$.
Then it consists of Gevrey ultra-differentiable functions, cf. \cite{R}.
If $\rho_1=1$, then $ \varphi \in \mathcal{S}_{{\rho_2}}^{{\rho_1}} (\mathbb{R}^n) $ is
a real analytic function, and if $0<\rho_1<1$,
then it is an entire function.

\par

\begin{remark} \label{remark equivalent sequences}
We will often use an equivalent family of norms where in
\eqref{norma-GS} (and in other similar situations)
$ x^{\alpha}\varphi^{(\beta)} (x)$
is replaced by $ \langle x \rangle^{\alpha}\varphi^{(\beta)} (x)$,
 $ (\langle x \rangle^{\alpha}\varphi (x))^{(\beta)} $
or $ (x^{\alpha}\varphi (x))^{(\beta)} $. Moreover, instead of the supremum norm any $L^p$ norm ($1\leq p<\infty$)
gives rise to an equivalent topology on $ \mathcal{S}_{{\rho_2}}^{{\rho_1}} (\mathbb{R}^n) $
(cf. \cite[Ch 2.5]{CKP}).
\end{remark}

\par

We denote by $ (\mathcal{S}_{{\rho_2} } ^{{\rho_1}})_0 (\mathbb{R}^n) $ the closed subspace of
$ \mathcal{S}_{{\rho_2}}^{{\rho_1}} (\mathbb{R}^n)$ given by
$$
(\mathcal{S}_{{\rho_2} }^{{\rho_1}})_0
(\mathbb{R}^n)=\left\{\varphi \in \mathcal{S}_{{\rho_2}}^{{\rho_1}}
(\mathbb{R}^n): \: {\mu}_m (\varphi) = 0,\ \forall m \in
\mathbb{N}^{n} \right\}.
$$

\par

One can show that $ (\mathcal{S}_{{\rho_2}}^{{\rho_1}})_0 (\mathbb{R}^n) $, $ {\rho_1}, {\rho_2} > 0,$ is
nontrivial if and only if $ {\rho_2} > 1 $ (cf. \cite{GS}).

\par

\subsection{Gelfand-Shilov type spaces on the upper half-space}\label{G-S upper half-space}
In this subsection we introduce a new scale of function spaces which describes
sharp subexponential/superexponential localization over the upper half-space.

To this end, we employ parameters which measure the decay properties of a function
with respect to the scaling variable $a>0$ at zero and at infinity, as well
as their Gevrey ultra-differentiability and decay properties in the localization variable $b.$
While the seminorms in \eqref{norma-hl} measure polynomial decay of a certain order with
respect to the scaling parameter $a>0$ at zero and at infinity,
the seminorms in  \eqref{norma-GSupper}
may detect (super- and sub-) exponential decay of different orders at zero and at infinity.

\par

\begin{definition} \label{upper-half-space}
Let $ s, t, \tau_1, \tau_2 > 0 $.
A smooth function $ \Phi $ belongs to $ \mathcal{S}^{s}_{t, \tau_1, \tau_2}
(\mathbb{H}^{n+1}), $  if for every $\alpha \in \mathbb{N} $
there exists a constant $h>0$
such that
\begin{equation} \label{norma-GSupper}
p_{\alpha, h}^{s, t, \tau_1, \tau_2} (\Phi) = \sup
\frac{h^{|\beta| + k + l_1 + l_2}}{\beta!^{s} k!^{t} l_1!^{\tau_1}
l_2!^{\tau_2}} \, \left(a^{l_1} + \frac{1}{a^{l_2}}\right) \,
\langle b \rangle^k\left|  \, \partial_a^{\alpha}
\partial_b^{\beta} \Phi (b, a) \right| < \infty,
\end{equation}
where the supremum is taken over
$$ ((b, a), (k, l_1, l_2), \beta) \in \Lambda = \mathbb{H}^{n+1} \times \mathbb{N}^3 \times
\mathbb{N}^n.
$$
\end{definition}

\par

The topology of $ \mathcal{S}^{s}_{t,\tau_1, \tau_2}
(\mathbb{H}^{n+1}) $ is defined via the family of seminorms
\eqref{norma-GSupper}, as inductive limit with respect to $h$ and projective limit with respect to $\alpha$.

The space $ \mathcal{S}^{s}_{t,\tau_1, \tau_2} (\mathbb{H}^{n+1}) $ is
nontrivial if and only if $ s+ t \geq 1$, which can be proved as follows.

Consider the set of smooth functions in $\mathbb H^{n+1}$ of the form
$\Phi(b,a)=g(b)f(a)$, $b\in\mathbb R^n$, $a\in\mathbb R_+.$
Then  $ p_{\alpha, h}^{s, t, \tau_1, \tau_2} (\Phi)<\infty $ is equivalent to
$ p^{s,t}_h(g)<\infty $ and
\begin{equation}\label{dod1}
\sup_{a>0,l_1,l_2\in \mathbb N}
\frac{h^{l_1 + l_2}}{ l_1!^{\tau_1}
l_2!^{\tau_2}} \, \left(a^{l_1} + \frac{1}{a^{l_2}}\right) |\partial_a^{\alpha}
 f(a)|< \infty.
\end{equation}
Thus, if $s+t\geq 1,$ then $ \mathcal{S}^{s}_{t,\tau_1, \tau_2} (\mathbb{H}^{n+1}) $ is non-trivial,
$\tau_1, \tau_2 > 0 $.
For example, if $g \in\mathcal S^s_t(\mathbb R^n),$ then
$ \mathbb H^{n+1}\ni(b,a)\mapsto e^{-a^{1/\tau_1} -a^{-1/\tau_2}} g(b) \in
\mathcal{S}^{s}_{t,\tau_1, \tau_2} (\mathbb{H}^{n+1}) $.

Since for fixed $a \in\mathbb R_+ $ and $ l_1 = l_2 = \alpha = 0 $,
it follows from \eqref{norma-GSupper} that $  \Phi (\cdot , a) \in \mathcal{S}^s_t(\mathbb R^n),$
we see that the condition
$s+t\geq 1$ is also necessary for the non-triviality of
$ \mathcal{S}^{s}_{t,\tau_1, \tau_2} (\mathbb{H}^{n+1}). $

\par

Obviously, the family $ \mathcal{S}^{s}_{t,\tau_1, \tau_2} (\mathbb{H}^{n+1}) $ is increasing with
respect to parameters $ s,t, \tau_1, \tau_2 $. The parameters $ \tau_1 $ and $\tau_2 $
measure the behavior of $ \Phi \in \mathcal{S}^{s}_{t, \tau_1, \tau_2} (\mathbb{H}^{n+1}), $
with respect to $a>0$ at infinity and at zero, respectively.

\par

It can be shown that all these spaces of test functions
are closed under multiplication by (ultra-)polynomials, partial differentiation (or more generally ultra-differential operators),
translation and  dilation, cf. \cite{GS} for $ \mathcal{S}_{{\rho_2}}^{{\rho_1}}
(\mathbb{R}^n) $.
The following lemma can be proved in the same way as it is done in
\cite[Chapter IV 6.2]{GS} for $ \mathcal{S}_{{\rho_2}}^{{\rho_1}} (\mathbb{R}^n)$, we therefore omit its proof.

\begin{lemma} \label{FTH}
Let  $ \Phi  \in C^\infty  (\mathbb{H}^{n+1}) $ and let
$ \mathcal{F}_1 \Phi $ denote its Fourier transform  with respect to the first variable:
$$
\mathcal{F}_1 \Phi (\xi, a) = \int_{\mathbb{R}^n} e^{- i b
\xi} \, \Phi (b, a) \, db, \; \; (\xi, a) \in \mathbb{H}^{n+1}.
$$
Then $ \Phi  \in {\mathcal S}^{s}_{t, \tau_1, \tau_2} (\mathbb{H}^{n+1}) $ if and only if
$ \mathcal{F}_1 \Phi \in  {\mathcal S}^{t}_{s, \tau_1, \tau_2} (\mathbb{H}^{n+1}).$
Furthermore, $ \mathcal{F}_1 $ is a topological isomorphism between
$ {\mathcal S}^{s}_{t, \tau_1, \tau_2} (\mathbb{H}^{n+1}) $ and
$ {\mathcal S}^{t}_{s, \tau_1, \tau_2} (\mathbb{H}^{n+1}). $
\end{lemma}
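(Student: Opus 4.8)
The plan is to mimic the classical proof for $\mathcal{S}^{s}_{t}(\mathbb{R}^n)$ in \cite[Chapter IV 6.2]{GS}, the point being that the scaling variable $a$ is \emph{inert} under $\mathcal{F}_1$: the operator $\mathcal{F}_1$ commutes with $\partial_a^{\alpha}$ and with multiplication by $a^{l_1}+a^{-l_2}$. First I would record that $\partial_a^{\alpha}\mathcal{F}_1\Phi=\mathcal{F}_1(\partial_a^{\alpha}\Phi)$ — differentiation under the integral sign being justified by the decay in $b$ built into \eqref{norma-GSupper} (take $\beta=0$, $l_1=l_2=0$ there, which already yields bounds on $\langle b\rangle^{k}|\partial_a^{\alpha}\Phi(b,a)|$ uniform in $a$). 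Hence one may fix $\alpha$, set $g:=\partial_a^{\alpha}\Phi$, and note that for each $a>0$ we have $g(\cdot,a)\in\mathcal{S}^{s}_{t}(\mathbb{R}^n)\subset L^1(\mathbb{R}^n)$, so $\mathcal{F}_1 g(\cdot,a)$ is well defined and smooth.

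The core estimate rests on the identity
\[
\xi^{\mu}\partial_{\xi}^{\nu}\mathcal{F}_1 g(\xi,a)=(-i)^{|\mu|+|\nu|}\,\mathcal{F}_1\bigl(\partial_b^{\mu}(b^{\nu}g)\bigr)(\xi,a),
\]
which yields $|\xi^{\mu}\partial_{\xi}^{\nu}\mathcal{F}_1 g(\xi,a)|\le\int_{\mathbb{R}^n}\bigl|\partial_b^{\mu}(b^{\nu}g(b,a))\bigr|\,db$. Expanding by the Leibniz rule, bounding $|b^{\nu-\rho}|\le\langle b\rangle^{|\nu-\rho|}$, and applying \eqref{norma-GSupper} with the weight $\langle b\rangle^{|\nu-\rho|+n+1}$ leaves an integrable tail $\langle b\rangle^{-(n+1)}$ and reduces the $b$-integral to a finite combinatorial sum. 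The factorial rearrangement in that sum — which is precisely where the Gevrey exponents $s$ and $t$ get transposed — is carried out exactly as in \cite[Chapter IV 6.2]{GS}, using $|\alpha|!\le n^{|\alpha|}\alpha!$ and $\alpha!\beta!\le(\alpha+\beta)!\le 2^{|\alpha|+|\beta|}\alpha!\beta!$. Throughout, the factor $(a^{l_1}+a^{-l_2})^{-1}$ and the factorials $l_1!^{\tau_1}l_2!^{\tau_2}$ produced by \eqref{norma-GSupper} simply ride along, since $\mathcal{F}_1$ does not touch them. The upshot is a quantitative bound
\[
p_{\alpha,h'}^{t,s,\tau_1,\tau_2}(\mathcal{F}_1\Phi)\lesssim p_{\alpha,h}^{s,t,\tau_1,\tau_2}(\Phi),
\]
with $h'$ and the implied constant depending only on $h$ and $n$ (in particular, independent of $\alpha$, $a$, $\tau_1$, $\tau_2$). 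This gives both $\mathcal{F}_1\Phi\in\mathcal{S}^{t}_{s,\tau_1,\tau_2}(\mathbb{H}^{n+1})$ and the continuity of $\mathcal{F}_1:\mathcal{S}^{s}_{t,\tau_1,\tau_2}(\mathbb{H}^{n+1})\to\mathcal{S}^{t}_{s,\tau_1,\tau_2}(\mathbb{H}^{n+1})$ for the topology that is inductive in $h$ and projective in $\alpha$.

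Running the same argument with the inverse Fourier transform in $\xi$ — which, up to the reflection $b\mapsto -b$ and a unimodular constant, is the same transform — yields the continuity of the map in the opposite direction, and since $\mathcal{F}_1^{-1}\mathcal{F}_1=\mathrm{id}$ already at the level of integrable functions in $b$, $\mathcal{F}_1$ is a topological isomorphism; the stated equivalence is then immediate. The only genuinely delicate step is the factorial bookkeeping in the core estimate, but it is identical to the one-dimensional Gelfand--Shilov computation and contributes nothing new: the extra half-space data — the exponents $\tau_1$, $\tau_2$, the weight $a^{l_1}+a^{-l_2}$ and the parameter $\alpha$ — are inert, and handling them amounts to the trivial remark that $\mathcal{F}_1$ ignores them.
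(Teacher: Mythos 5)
Your proposal is correct and follows exactly the route the paper intends: the paper omits the proof of Lemma \ref{FTH}, stating only that it goes ``in the same way as in \cite[Chapter IV 6.2]{GS}'', and your argument is precisely that classical exchange of $b$-derivatives and $b$-powers under $\mathcal{F}_1$, with the observation that the scale variable $a$, the weight $a^{l_1}+a^{-l_2}$, the factorials $l_1!^{\tau_1}l_2!^{\tau_2}$ and the parameter $\alpha$ are inert. The quantitative seminorm bound $p_{\alpha,h'}^{t,s,\tau_1,\tau_2}(\mathcal{F}_1\Phi)\lesssim p_{\alpha,h}^{s,t,\tau_1,\tau_2}(\Phi)$ together with the symmetric estimate for $\mathcal{F}_1^{-1}$ indeed yields the topological isomorphism for the inductive-in-$h$, projective-in-$\alpha$ topology.
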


\par

Next, we show that \eqref{norma-GSupper} precisely describes the rate of decay of the derivatives of $ \Phi $.

\begin{proposition} \label{prop2}
Let $ \Phi  \in {\mathcal S}^{s}_{t, \tau_1, \tau_2} (\mathbb{H}^{n+1}) $
and  $ \alpha \in \mathbb{N}$. Set
$$
q_{\alpha, h}^{s, t, \tau_1, \tau_2} (\Phi) := \sup_{((b, a), \beta) \in
\mathbb{H}^{n+1} \times \mathbb{N}^n}
\frac{h^{|\beta|}}{\beta!^{s}} \, e^{h\left(a^{1/\tau_1} +
a^{-1/\tau_2} + |b|^{1/t} \right)} \, \left|\partial_a^{\alpha}
\partial_b^{\beta} \Phi (b, a) \right|.
$$
Then
$ p_{\alpha, h}^{s, t, \tau_1, \tau_2} (\Phi) < \infty $ for some $h>0$,
if and only if $ q_{\alpha, h}^{s, t, \tau_1, \tau_2}(\Phi) < \infty $ for some  $h>0$.
\end{proposition}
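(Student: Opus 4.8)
The plan is to reduce the statement to the elementary one-dimensional equivalence between factorial-weighted sequence suprema and subexponential weights. The key fact I would isolate first is the following: for $\tau>0$, $h>0$ and $\lambda\ge0$,
$$
\sup_{k\in\mathbb N}\frac{h^{k}\lambda^{k}}{k!^{\tau}}\ \asymp\ e^{\,c\,h^{1/\tau}\lambda^{1/\tau}},
$$
with implied constants and $c>0$ depending only on $\tau$. This follows from Stirling's formula applied to the near-maximizing index $k\asymp h^{1/\tau}\lambda^{1/\tau}$, or equivalently from the classical comparison of the Mittag--Leffler type series $\sum_{k}x^{k}/k!^{\tau}$ with $e^{\tau x^{1/\tau}}$, together with the trivial remark that halving $h$ turns the supremum into a convergent series and back. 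Taking reciprocals yields the companion bound $\inf_{k}k!^{\tau}/(h^{k}\lambda^{k})\asymp e^{-c\,h^{1/\tau}\lambda^{1/\tau}}$. I would also record $\langle b\rangle^{1/t}\asymp1+|b|^{1/t}$, which lets one freely interchange $\langle b\rangle$ and $|b|$ inside such exponentials.

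For the implication $p_{\alpha,h}^{s,t,\tau_1,\tau_2}(\Phi)<\infty\Rightarrow q_{\alpha,h_1}^{s,t,\tau_1,\tau_2}(\Phi)<\infty$ for some $h_1$, denote $M=p_{\alpha,h}^{s,t,\tau_1,\tau_2}(\Phi)$, fix $(b,a)$ and $\beta$, and read off from \eqref{norma-GSupper} the pointwise bound for $\left|\partial_a^{\alpha}\partial_b^{\beta}\Phi(b,a)\right|$, which one then optimizes over $k,l_1,l_2$. Using $1/(a^{l_1}+a^{-l_2})\le\min(a^{-l_1},a^{l_2})$, taking $l_2=0$ when $a\ge1$ and $l_1=0$ when $a<1$, and infimizing the remaining factorial quotients in $k$ and in the surviving $l_i$, the reciprocal estimate gives
$$
\left|\partial_a^{\alpha}\partial_b^{\beta}\Phi(b,a)\right|\ \lesssim\ M\,\frac{\beta!^{s}}{h^{|\beta|}}\,e^{-c\left(\langle b\rangle^{1/t}+a^{1/\tau_1}+a^{-1/\tau_2}\right)},
$$
where on $\{a\ge1\}$ (resp.\ $\{a<1\}$) the summand $a^{-1/\tau_2}$ (resp.\ $a^{1/\tau_1}$) is bounded and hence harmless. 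Replacing $\langle b\rangle^{1/t}$ by $|b|^{1/t}$ and taking $h_1>0$ small enough then gives $q_{\alpha,h_1}^{s,t,\tau_1,\tau_2}(\Phi)<\infty$.

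For the converse I would start from $M=q_{\alpha,h}^{s,t,\tau_1,\tau_2}(\Phi)<\infty$, that is, $\left|\partial_a^{\alpha}\partial_b^{\beta}\Phi(b,a)\right|\le M\,\beta!^{s}h^{-|\beta|}e^{-h(a^{1/\tau_1}+a^{-1/\tau_2}+|b|^{1/t})}$, split the weight into its three exponential factors, and match each against the corresponding sequence supremum. Distributing $a^{l_1}+a^{-l_2}$ over the product and using that $\sup_{l}h_1^{l}/l!^{\tau_i}$ is a finite constant, the forward direction of the key fact gives, for any $h_1>0$,
$$
\sup_{k}\frac{h_1^{k}\langle b\rangle^{k}}{k!^{t}}\ \lesssim\ e^{c\,h_1^{1/t}|b|^{1/t}},\qquad
\sup_{l_1,l_2}\frac{h_1^{l_1+l_2}\left(a^{l_1}+a^{-l_2}\right)}{l_1!^{\tau_1}l_2!^{\tau_2}}\ \lesssim\ e^{c\,h_1^{1/\tau_1}a^{1/\tau_1}}+e^{c\,h_1^{1/\tau_2}a^{-1/\tau_2}}.
$$
Multiplying these three pieces by the three factors of $e^{-h(a^{1/\tau_1}+a^{-1/\tau_2}+|b|^{1/t})}$ and by $(h_1/h)^{|\beta|}$ shows that the supremum defining $p_{\alpha,h_1}^{s,t,\tau_1,\tau_2}(\Phi)$ is $\lesssim M$, provided $h_1$ is small enough that $h_1\le h$ and $c\,h_1^{1/t},\,c\,h_1^{1/\tau_1},\,c\,h_1^{1/\tau_2}\le h$; this proves $p_{\alpha,h_1}^{s,t,\tau_1,\tau_2}(\Phi)<\infty$.

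The only genuinely delicate point is handling the coupled weight $a^{l_1}+a^{-l_2}$ without letting the two indices interfere; for this one uses the elementary two-sided bound $\max(a^{l_1},a^{-l_2})\le a^{l_1}+a^{-l_2}\le2\max(a^{l_1},a^{-l_2})$ together with the case distinction $a\ge1$ versus $a<1$. After that the argument runs exactly parallel to Gelfand and Shilov's characterization of $\mathcal S_{\rho_2}^{\rho_1}(\mathbb R^{n})$ by exponential weights, with the remaining steps being the standard Stirling estimate and $\langle b\rangle^{1/t}\asymp1+|b|^{1/t}$.
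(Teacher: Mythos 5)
Your argument is correct and follows essentially the same route as the paper: both directions rest on the two-sided comparison between $\sup_{k}(h\lambda)^{k}/k!^{\tau}$ (equivalently the series $\sum_{k}(h\lambda)^{k}/k!^{\tau}$) and $e^{c(h\lambda)^{1/\tau}}$, applied separately in $k$, $l_1$, $l_2$ together with the case split $a\ge 1$ versus $a<1$. If anything, your converse is slightly cleaner: you distribute $a^{l_1}+a^{-l_2}$ directly against the exponential weight, whereas the paper first derives three separate factorial bounds and multiplies them, which produces $\left|\partial_a^{\alpha}\partial_b^{\beta}\Phi(b,a)\right|^{3}$ and leaves implicit the step needed to return to the first power.
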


\begin{proof}
Assume that $ p_{\alpha, h}^{s, t, \tau_1, \tau_2} (\Phi) < \infty $ for some $h>0$.
Then, for any given $ l_1, l_2, k \in \mathbb{N} $,
$$
\frac{h^{l_1 + l_2 + k}}{l_1!^{\tau_1} l_2!^{\tau_2} k!^{t}}
\left(a^{l_1} + \frac{1}{a^{l_2}}\right) \langle b \rangle^{k}
| \partial_a^{\alpha} \Phi(b,a)|
$$
is uniformly bounded on $\mathbb{H}^{n+1}.$
This implies that appropriate summations over
$ l_1, l_2$ and $ k $ are also uniformly bounded.
Indeed, the estimate
$$
C^{-1} \, e^{(r - \varepsilon) \eta^{1/r}}
\leq \sum_{j = 0}^{\infty} \frac{\eta^j}{(j!)^r} \leq C \, e^{(r + \varepsilon)
\eta^{1/r}}, \; \forall \eta \geq 0,
$$
which holds for every $ r, \varepsilon > 0 $ and for some $ C = C(r, \varepsilon) > 0$, yields
$$
|\partial_a^{\alpha} \partial_b^{\beta} \Phi(b,a)| \lesssim \tilde{h}^{|\beta|} \beta!^s e^{-
\tilde{h} \left(a^{\frac{1}{\tau_1}} +
(\frac{1}{a})^{\frac{1}{\tau_2}} + |b|^{\frac{1}{t}}\right)},
\;\;\; (b, a) \in \mathbb{H}^{n+1}, \beta \in \mathbb{N}^n,
$$
for some $ \tilde{h} > 0 $. By taking the corresponding supremum, we obtain that
$ q_{\alpha, h}^{s, t, \tau_1, \tau_2}(\Phi) $ is finite for some  $h>0$.

\par

Conversely, assume that $ q_{\alpha, h}^{s, t, \tau_1, \tau_2}(\Phi) < \infty $ for some  $h>0$.
Employing the same estimate as above, we conclude that
$$
(1 + a^{l_1}) |\partial_a^{\alpha} \partial_b^{\beta} \Phi(b,a)| \lesssim h^{|\beta| + l_1}
\beta!^s l_1!^{\tau_1},
$$
$$ \left(1 + \frac{1}{a^{l_2}}\right)
|\partial_a^{\alpha} \partial_b^{\beta} \Phi(b,a)| \lesssim h^{|\beta| + l_2} \beta!^s
l_2!^{\tau_2}
$$
and
$$ \langle b \rangle^k | \partial_a^{\alpha} \partial_b^{\beta} \Phi(b,a)|
\lesssim h^{|\beta| + k} \beta!^s k!^t,
$$
for every $  ((b, a), (k, l_1, l_2), \beta) \in \Lambda$.
Hence,
$$
\left(a^{l_1} + \frac{1}{a^{l_2}}\right) \langle b \rangle^{k}
|\partial_a^{\alpha} \partial_b^{\beta} \Phi(b,a)|^3 \lesssim h^{3|\beta| + l_1 + l_2 + k}
\beta!^{3s} l_1!^{\tau_1} l_2!^{\tau_2} k!^t,
$$
i.e.
$$
\frac{\tilde{h}^{|\beta| + l_1 + l_2 + k}}{  \beta!^s l_1!^{\tau_1} l_2!^{\tau_2} k!^t}
\left(a^{l_1} + \frac{1}{a^{l_2}}\right) \langle b \rangle^{k}
|\partial_a^{\alpha} \partial_b^{\beta} \Phi(b,a)|
< C
$$
for some $ \tilde{h},C >0. $ By taking the supremum over  $ \Lambda $, we obtain that
$ p_{\alpha, h}^{s, t, \tau_1, \tau_2} (\Phi) $ is finite for some $h>0$.

\end{proof}

\par


\section{Wavelet transform of ultradifferentiable functions and ultradistributions} \label{Sec2}

In this section we study continuity properties of wavelet
transforms on Gelfand-Shilov spaces of ultradifferentiable functions and their duals. In particular, we derive the
resolution of identity formula in a class of tempered
ultradistributions. As mentioned in the introduction, the most technical proofs
are postponed to Section \ref{Sec3}.

\subsection{Continuity theorems}
A function $ \psi \in {\mathcal S}^{\rho_1}_{\rho_2} (\mathbb{R}^n)$
is called a {\em wavelet} if  $\mu_{0}(\psi ) = 0$.
The wavelet transform of a tempered ultradistribution $ f \in
({\mathcal S}^{\rho_1}_{\rho_2} (\mathbb{R}^n))' $ with respect to the
wavelet $ \psi \in {\mathcal S}^{\rho_1}_{\rho_2} (\mathbb{R}^n)$ is
defined via
\begin{equation}  \label{Wavelet}
\mathcal{W}_{\psi} f (b, a) = \left\langle f (x), \frac{1}{a^n}
\bar{\psi}\left(\frac{x - b}{a}\right) \right\rangle =
\frac{1}{a^n}\int_{\mathbb{R}^n} f (x)  \bar{\psi}\left(\frac{x -
b}{a}\right) \,\mathrm{d}x,
\end{equation}
where $(b,a)\in \mathbb{H}^{n+1}$. In fact, if $ \psi $ is a test function and
the dual pairing in \eqref{Wavelet} makes sense, then we call $ \mathcal{W}_{\psi} f $
the wavelet transform of $f$ with respect to $ \psi $.

\par

We first investigate continuity properties of the
wavelet transform when the analyzing function belongs to a space of
ultradifferentiable functions.

\par

\begin{theorem} \label{th3} Let  $\rho_1 > 0 $, $ \rho_2 > 1 $ and let
$ s >0$, $ t > \rho_1 + \rho_2 $,  $ \tau_1 >\rho_1  $ and $ \tau_2  > \rho_2 - 1$.
Then the mapping
$$
{\mathcal W} : ({\mathcal S}^{\rho_1} _ {\rho_2})_0
(\mathbb{R}^n) \times ({\mathcal S}^{\min\{s, \tau_2 - \rho_2 +1\}}_{1
- \rho_1+  \min\{t - \rho_2, \tau_1\} })_0 (\mathbb{R}^n) \to
{\mathcal S}^{s}_{t, \tau_1, \tau_2} (\mathbb{H}^{n+1}),
$$
given by $ {\mathcal W} : (\psi, \varphi) \mapsto {\mathcal
W}_{\psi} \varphi $, is continuous.
\end{theorem}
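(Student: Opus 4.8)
The plan is to transport the problem to the Fourier side. A change of variables in \eqref{Wavelet} gives, for test functions $\psi,\varphi$,
\[
\mathcal{F}_1[\mathcal{W}_\psi\varphi](\xi,a)=\widehat{\varphi}(\xi)\,\overline{\widehat{\psi}(a\xi)},\qquad (\xi,a)\in\mathbb{H}^{n+1},
\]
so by Lemma \ref{FTH} it suffices to prove that $(\xi,a)\mapsto\widehat\varphi(\xi)\overline{\widehat\psi(a\xi)}$ lies in $\mathcal{S}^{t}_{s,\tau_1,\tau_2}(\mathbb{H}^{n+1})$, with the relevant seminorm bounds depending multiplicatively on seminorms of $\psi$ and $\varphi$. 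Applying Proposition \ref{prop2} with the roles of $s$ and $t$ interchanged, this reduces to establishing, for each fixed $\alpha\in\mathbb{N}$, an estimate of the form
\[
\bigl|\partial_a^\alpha\partial_\xi^\beta\bigl(\widehat\varphi(\xi)\overline{\widehat\psi(a\xi)}\bigr)\bigr|\lesssim h^{|\beta|}\,\beta!^{\,t}\,e^{-h\left(a^{1/\tau_1}+a^{-1/\tau_2}+|\xi|^{1/s}\right)},\qquad(\xi,a)\in\mathbb{H}^{n+1},\ \beta\in\mathbb{N}^n,
\]
for some $h>0$.

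Next I would record two pointwise estimates on the factors. Setting $\sigma_1=\min\{s,\tau_2-\rho_2+1\}$ and $\sigma_2=1-\rho_1+\min\{t-\rho_2,\tau_1\}$, one has $\varphi\in(\mathcal{S}^{\sigma_1}_{\sigma_2})_0(\mathbb{R}^n)$, and the hypotheses $\rho_2>1$, $t>\rho_1+\rho_2$, $\tau_1>\rho_1$ guarantee precisely $\rho_2-1>0$, $\sigma_2>1$, and $\tau_2-\rho_2+1>0$. Since all moments of $\psi$ vanish, $\widehat\psi$ — and with it every derivative $\partial^\nu\widehat\psi$ — vanishes to infinite order at the origin; feeding this, through Taylor's formula, into the Gevrey bound $|\partial^\mu\widehat\psi(\eta)|\lesssim C^{|\mu|}\mu!^{\rho_2}$ (valid because $\widehat\psi\in\mathcal{S}^{\rho_2}_{\rho_1}(\mathbb{R}^n)$) and optimizing the order of the expansion yields the key estimate
\[
|\partial^\nu\widehat\psi(\eta)|\lesssim C^{|\nu|}\,\nu!^{\rho_2}\,e^{-h\left(|\eta|^{1/\rho_1}+|\eta|^{-1/(\rho_2-1)}\right)},\qquad\eta\in\mathbb{R}^n,\ \nu\in\mathbb{N}^n,
\]
whose second exponential term — the genuinely new ingredient, coming from the cancellations — detects the Gevrey index $\rho_2-1$ at the origin. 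The identical argument applied to $\varphi$ gives $|\partial^\nu\widehat\varphi(\xi)|\lesssim C^{|\nu|}\nu!^{\sigma_2}e^{-h(|\xi|^{1/\sigma_1}+|\xi|^{-1/(\sigma_2-1)})}$.

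I would then differentiate. Because $\partial_\xi^\gamma[\widehat\psi(a\xi)]=a^{|\gamma|}(\partial^\gamma\widehat\psi)(a\xi)$, and each $\partial_a$ brings down a factor $\xi^m$ (with the $|m|$'s summing to at most the fixed $\alpha$) together with one additional derivative of $\widehat\psi$, the Leibniz rule writes $\partial_a^\alpha\partial_\xi^\beta(\widehat\varphi(\xi)\overline{\widehat\psi(a\xi)})$ as a finite sum of terms $\binom{\beta}{\gamma}\,|\partial^{\beta-\gamma}\widehat\varphi(\xi)|\,a^{|\gamma|}\,|\xi|^{\alpha}\,|(\partial^{\gamma+m}\widehat\psi)(a\xi)|$ with $\gamma\le\beta$, $|m|\le\alpha$, in which $|\xi|^\alpha$ and all $\alpha$-dependent factorials are harmless. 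Inserting the two estimates above, everything comes down to the scalar inequality
\[
a^{|\gamma|}\,e^{-h\left(|\xi|^{1/\sigma_1}+|\xi|^{-1/(\sigma_2-1)}+(a|\xi|)^{1/\rho_1}+(a|\xi|)^{-1/(\rho_2-1)}\right)}\lesssim C^{|\gamma|}\,\gamma!^{\,\kappa}\,e^{-h'\left(a^{1/\tau_1}+a^{-1/\tau_2}+|\xi|^{1/s}\right)}
\]
for an appropriate exponent $\kappa$, together with the factorial check $\binom{\beta}{\gamma}(\beta-\gamma)!^{\sigma_2}\gamma!^{\rho_2}\gamma!^{\kappa}\lesssim C^{|\beta|}\beta!^{\,t}$. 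I expect this scalar inequality to be the main obstacle.

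To handle it I would split into $a\ge1$ and $a\le1$ — in each region only two of the three target exponentials require a genuine gain — and in each region read off the needed decay from whichever of the four terms dominates, according to the size of $a|\xi|$. The elementary one‑variable computations
\[
\inf_{\xi}\bigl[(a|\xi|)^{1/\rho_1}+|\xi|^{-1/(\sigma_2-1)}\bigr]\asymp a^{1/(\sigma_2-1+\rho_1)}\ \ (a\ge1),\qquad \inf_{\xi}\bigl[(a|\xi|)^{-1/(\rho_2-1)}+|\xi|^{1/\sigma_1}\bigr]\asymp a^{-1/(\sigma_1+\rho_2-1)}\ \ (a\le1),
\]
combined with $\sup_{x\ge0}x^{N}e^{-hx^{c}}\lesssim C^{N}N!^{1/c}$, permit the choice $\kappa=\sigma_2-1+\rho_1$ (absorbing $a^{|\gamma|}$ into a portion of the first infimum, which is available only for $a\ge1$; for $a\le1$ one has $a^{|\gamma|}\le1$ and $\kappa=0$). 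All numerical hypotheses enter exactly here, via the inequalities $\sigma_1\le s$, $\sigma_1\le\tau_2-\rho_2+1$ and $\sigma_2-1+\rho_1\le\tau_1$, which are immediate from the definitions of $\sigma_1,\sigma_2$ and from $\rho_1>0$; they ensure respectively that $|\xi|^{1/\sigma_1}$‑decay dominates $|\xi|^{1/s}$‑decay, that $(a|\xi|)^{-1/(\rho_2-1)}+|\xi|^{1/\sigma_1}$ dominates $a^{-1/\tau_2}$, and that $(a|\xi|)^{1/\rho_1}+|\xi|^{-1/(\sigma_2-1)}$ dominates $a^{1/\tau_1}$ with enough slack to pay for $a^{|\gamma|}$. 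Finally the factorial bookkeeping closes the argument: using $(\beta-\gamma)!\,\gamma!\le\beta!$ and $\rho_1+\rho_2>1$,
\[
\binom{\beta}{\gamma}(\beta-\gamma)!^{\sigma_2}\gamma!^{\rho_2}\gamma!^{\kappa}=\beta!\,(\beta-\gamma)!^{\sigma_2-1}\gamma!^{\rho_2+\sigma_2+\rho_1-2}\lesssim C^{|\beta|}\,\beta!^{\,\rho_2+\sigma_2+\rho_1-1},
\]
and $\rho_2+\sigma_2+\rho_1-1\le t$ precisely because $\sigma_2\le1-\rho_1+t-\rho_2$ (the exponent being even smaller when $a\le1$, where $\kappa=0$), so the image has Gevrey index at most $t$. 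Summing the finitely many Leibniz terms, taking the supremum, and noting that $h'$ and $C$ are controlled by fixed seminorms of $\varphi$ and $\psi$, we obtain separate — hence, by bilinearity, joint — continuity of $\mathcal{W}$.
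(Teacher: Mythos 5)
Your argument is correct, but it takes a genuinely different route from the paper's. You push the whole problem to the Fourier side: via Lemma \ref{FTH} you reduce to showing that $(\xi,a)\mapsto\widehat{\varphi}(\xi)\overline{\widehat{\psi}(a\xi)}$ belongs to ${\mathcal S}^{t}_{s,\tau_1,\tau_2}(\mathbb{H}^{n+1})$, via Proposition \ref{prop2} you replace the factorial seminorms by pointwise subexponential weights, and you then feed in the two-sided estimate $|\partial^{\nu}\widehat{\psi}(\eta)|\lesssim C^{|\nu|}\nu!^{\rho_2}e^{-h(|\eta|^{1/\rho_1}+|\eta|^{-1/(\rho_2-1)})}$ and its analogue for $\widehat{\varphi}$, the flatness at the origin being the Fourier-side incarnation of the vanishing moments; the rest is elementary one-variable optimization plus factorial bookkeeping, and the numerical hypotheses enter exactly where you say they do. The paper never leaves the mixed picture: it splits the seminorm into two pieces $J_1$ (controlling $a^{l_1}$ and $\langle b\rangle^{k}$) and $J_2$ (controlling $a^{-l_2}$), generates the $b$-decay by integrating $(1-\triangle_\xi)^{k}$ by parts, splits the $\xi$-integral at $|\xi|=1$, and uses Taylor's formula with explicit remainder twice --- for $\widehat{\varphi}$ at the origin (moments of $\varphi$) to tame $a^{l_1}$, and for $\varphi$ in physical space against the moments of $\psi$ to tame $a^{-l_2}$. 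Your version buys conceptual transparency (each hypothesis, e.g.\ $\sigma_1\le\tau_2-\rho_2+1$ and $\rho_1+\sigma_2-1\le\tau_1$, is visible in a single scalar inequality) at the cost of routing through the equivalence of Proposition \ref{prop2}; the paper's computation is more laborious but stays within the defining seminorms, which makes the continuity constants (the chain $h_0,\dots,h_7$) explicit. Two small points to tighten: the uniformity in $\nu$ of your key estimate requires splitting $(\nu+r)!^{\rho_2}\le 2^{\rho_2(|\nu|+|r|)}\nu!^{\rho_2}r!^{\rho_2}$ before optimizing the Taylor order, and the multiplicative bound $p(\mathcal{W}_\psi\varphi)\lesssim p(\psi)\,p(\varphi)$ you obtain already yields joint continuity for the inductive-limit topologies, so the ``separate hence joint'' remark at the end is superfluous.
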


\par

\begin{remark}\label{rk2} In the sequel we will use the continuity of
$$
{\mathcal W} : ({\mathcal S}^{\rho_1} _ {\rho_2})_0
(\mathbb{R}^n) \times ({\mathcal S}^{s}_{t + 1
- \rho_1 - \rho_2})_0 (\mathbb{R}^n) \to
{\mathcal S}^{s}_{t, t- \rho_2, s +
 \rho_2  -1} (\mathbb{H}^{n+1})
$$
which follows from Theorem \ref{th3}, when
$ \tau_1 = t- \rho_2 $ and $ \tau_2 = s + \rho_2  -1 $.
\end{remark}

\par

The following simple facts are useful in calculations:

By the Plancherel theorem, we have
$$
{\mathcal W}_{\psi} \varphi (b, a) = \frac{1}{(2\pi)^n} \int_{\mathbb{R}^{n}} e^{ib\xi
}\overline{\hat{\psi}} (a\xi) \hat \varphi (\xi) d\xi, \;\;\; (b, a)
\in  \mathbb{H}^{n+1}.
$$
Hence $ \mathcal{F}_1 {\mathcal
W}_{\psi} \varphi (\xi, a) = \overline{\hat{\psi}} (a\xi) \hat
\varphi (\xi)$, $  (\xi, a) \in  \mathbb{H}^{n+1}$.
Moreover, for $  (b, a) \in  \mathbb{H}^{n+1}, $
$$
\partial_b ^\beta W_{\psi} \varphi (b,a) = \int_{\mathbb{R}^{n}} \varphi ^{(\beta)} (ax + b) \overline{\psi} (x) dx
= i^{|\beta|}
\int_{\mathbb{R}^{n}} e^{i\xi b} \xi^{\beta} \hat{\varphi} (\xi)\overline{\hat{\psi}} (a\xi) d\xi,
$$
and, if  $ \psi \in ({\mathcal S}^{\rho_1} _ { \rho_2})_0
(\mathbb{R}^n) $ then  $ \int b^\gamma {\mathcal W}_{\psi}
\varphi (b, a)  db = 0$, $ \gamma \in \mathbb{N}^n$.

\par

In order to construct the left-inverse for the wavelet transform, we
proceed as follows. The wavelet synthesis transform of $ \Phi \in
{\mathcal S}^{s}_{t, \tau_1, \tau_2} (\mathbb{H}^{n+1}) $, $ s, t,
\tau_1, \tau_2 > 0 $, $ s+t \geq 1$, with respect to $\phi  \in
({\mathcal S}^{\rho_1} _ {\rho_2})_0 (\mathbb{R}^n) $,  $ \rho_1>0$,
$ \rho_2 >1$, is defined by
$$
\mathcal{M}_{\phi} \Phi(x) = \int^{\infty}_{0} \left(
\int_{\mathbb{R}^{n}} \Phi(b,a) \frac{1}{a^{n}} \phi
\left(\frac{x-b}{a} \right)\mathrm{d}b \right) \frac{
\mathrm{d}a}{a}\: , \ \ \ x \in \mathbb{R}^{n}.
$$

\par

\begin{theorem} \label{th4} Let  $ \rho_1 > 0,$ $ \rho_2 > 1 $ and let
$s > 0$, $ t > \rho_2 $ and $ \tau > 0 $. Then the bilinear mappings
\begin{itemize}
\item[a)]$ \displaystyle {\mathcal M} : ({\mathcal S}^{\rho_1} _ {\rho_2})_0
(\mathbb{R}^n) \times {\mathcal S}^{\tau}_{t, t - \rho_2, s - \rho_1} (\mathbb{H}^{n+1})
\to ({\mathcal S}^{s}_{t})_0
(\mathbb{R}^n), $ when $ s > \rho_1$;
\item[b)]$ \displaystyle
 {\mathcal M} : ({\mathcal S}^{\rho_1} _ {\rho_2})_0
(\mathbb{R}^n) \times {\mathcal S}^{s}_{t,t - \rho_2, \tau}
(\mathbb{H}^{n+1}) \to ({\mathcal S}^{s}_{t})_0 (\mathbb{R}^n) $,
\end{itemize}
given by $ {\mathcal M}: (\phi, \Phi) \mapsto {\mathcal M}_{\phi} \Phi $,
are continuous.
\end{theorem}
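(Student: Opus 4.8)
The plan is to work with the form of the synthesis operator obtained by the change of variables $y=(x-b)/a$, namely $\mathcal{M}_{\phi}\Phi(x)=\int_{0}^{\infty}\int_{\mathbb{R}^{n}}\Phi(x-ay,a)\phi(y)\,dy\,\frac{da}{a}$, and to reduce all estimates to the pointwise bounds for $\Phi$ supplied by Proposition \ref{prop2}, i.e.\ $|\partial_{a}^{\alpha}\partial_{b}^{\beta}\Phi(b,a)|\lesssim h^{-|\beta|}\beta!^{s}e^{-h(a^{1/\tau_{1}}+a^{-1/\tau_{2}}+|b|^{1/t})}$ (with $s$ replaced by $\tau$ in case a)), together with the standard derivative and decay estimates for $\phi\in(\mathcal{S}^{\rho_1}_{\rho_2})_{0}(\mathbb{R}^{n})$. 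The argument splits into three parts: (i) $\mathcal{M}_{\phi}\Phi$ is well defined, smooth, and the iterated integrals converge absolutely, which is immediate from the super-exponential decay of $\Phi$ in $a$ near $0$ and near $\infty$ (in particular $\int_{0}^{1}a^{-1}e^{-ha^{-1/\tau_{2}}}\,da<\infty$ for any $\tau_{2}>0$ and $\int_{1}^{\infty}a^{-1}e^{-ha^{1/\tau_{1}}}\,da<\infty$); (ii) $\mu_{m}(\mathcal{M}_{\phi}\Phi)=0$ for every $m\in\mathbb{N}^{n}$, which follows by Fubini's theorem and the substitution $x=b+az$ in the inner integral, since $\int(b+az)^{m}\phi(z)\,dz=\sum_{k\leq m}\binom{m}{k}b^{m-k}a^{|k|}\mu_{k}(\phi)=0$ as all moments of $\phi$ vanish; (iii) the seminorm estimates showing $\mathcal{M}_{\phi}\Phi\in(\mathcal{S}^{s}_{t})_{0}(\mathbb{R}^{n})$ together with the continuity of the bilinear maps.

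For (iii) the crucial point is where the $x$-derivatives are placed. Differentiation under the integral sign gives $\partial_{x}^{\gamma}\mathcal{M}_{\phi}\Phi(x)=\int_{0}^{\infty}\int_{\mathbb{R}^{n}}(\partial_{b}^{\gamma}\Phi)(x-ay,a)\phi(y)\,dy\,\frac{da}{a}$. In case b) the Gevrey exponent of $\Phi$ in $b$ is exactly $s$, so this form is used directly and $|\partial_{b}^{\gamma}\Phi|\lesssim h^{-|\gamma|}\gamma!^{s}e^{-h(\cdots)}$ supplies the factor $\gamma!^{s}$. In case a) the $b$-exponent of $\Phi$ is an unrelated $\tau$, so I would instead integrate by parts in $y$, using $(\partial_{b}^{\gamma}\Phi)(x-ay,a)=(-1)^{|\gamma|}a^{-|\gamma|}\partial_{y}^{\gamma}[\Phi(x-ay,a)]$, to obtain $\partial_{x}^{\gamma}\mathcal{M}_{\phi}\Phi(x)=\int_{0}^{\infty}a^{-|\gamma|}\int_{\mathbb{R}^{n}}\Phi(x-ay,a)(\partial^{\gamma}\phi)(y)\,dy\,\frac{da}{a}$; now $\phi$ contributes $\gamma!^{\rho_1}$, while the factor $a^{-|\gamma|}$, controlled near $a=0$ by $a^{-|\gamma|}e^{-(h/2)a^{-1/\tau_{2}}}\lesssim C^{|\gamma|}(|\gamma|!)^{\tau_{2}}$, contributes $(|\gamma|!)^{s-\rho_1}$ because $\tau_{2}=s-\rho_1$, and the product is $\asymp\gamma!^{s}$. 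This is precisely why $s-\rho_1$ appears, and why $s>\rho_1$ is required, in case a).

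For the polynomial weight I would expand $x^{\alpha}=\sum_{\alpha_{1}+\alpha_{2}=\alpha}\binom{\alpha}{\alpha_{1}}(x-ay)^{\alpha_{1}}(ay)^{\alpha_{2}}$ inside the integral. In each summand: $(x-ay)^{\alpha_{1}}$ combines with the $b$-decay of $\Phi$ through $\sup_{b}|b|^{|\alpha_{1}|}e^{-(h/2)|b|^{1/t}}\lesssim C^{|\alpha_{1}|}\alpha_{1}!^{t}$ to give $\alpha_{1}!^{t}$; $y^{\alpha_{2}}$ combines with $\phi$ to give $\alpha_{2}!^{\rho_2}$ after integrating in $y$; and $a^{|\alpha_{2}|}$ combines with the decay $e^{-ha^{1/\tau_{1}}}$ at infinity to give $(|\alpha_{2}|!)^{\tau_{1}}=(|\alpha_{2}|!)^{t-\rho_2}$ after integrating $\frac{da}{a}$ over $[1,\infty)$ — this is why $\tau_{1}=t-\rho_2$. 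Since $\alpha_{2}!^{\rho_2}(|\alpha_{2}|!)^{t-\rho_2}\asymp\alpha_{2}!^{t}$, and $t>\rho_2>1$ yields $\binom{\alpha}{\alpha_{1}}\alpha_{1}!^{t}\alpha_{2}!^{t}=\alpha!\,(\alpha_{1}!\alpha_{2}!)^{t-1}\leq\alpha!^{t}$, summing over the at most $2^{|\alpha|}$ decompositions gives $|x^{\alpha}\partial_{x}^{\gamma}\mathcal{M}_{\phi}\Phi(x)|\lesssim C^{|\alpha|+|\gamma|}H^{-|\alpha|-|\gamma|}\alpha!^{t}\gamma!^{s}$ uniformly in $x$, i.e.\ $\mathcal{M}_{\phi}\Phi\in(\mathcal{S}^{s}_{t})_{0}(\mathbb{R}^{n})$. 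Tracking the constants, which depend only on $n$, the parameters, and the fixed data $h_{\phi}$, $h_{\Phi}$ entering the seminorms of $\phi$ and $\Phi$ (and using Proposition \ref{prop2} to pass between the $p$- and $q$-seminorms, noting that only the $\alpha=0$ part of the half-space topology is used), yields the claimed continuity of both bilinear maps. The only genuinely delicate step is the bookkeeping in (iii): one must check that the factorial contributions coming separately from $a$, $b$ and $y$ recombine exactly to $\alpha!^{t}$ and $\gamma!^{s}$ up to geometric factors — and this is exactly where the calibrations $\tau_{1}=t-\rho_2$ and $\tau_{2}=s-\rho_1$ are used — whereas the convergence and Fubini issues in (i)–(ii) are routine given the strong decay.
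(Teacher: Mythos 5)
Your argument is correct, and it reaches the same calibrations as the paper ($\tau_1=t-\rho_2$ traded against powers of $a$ at infinity, $\tau_2=s-\rho_1$ traded against $a^{-|\gamma|}$ near zero in case a)), but it travels a genuinely different — essentially Fourier-dual — route. The paper first invokes Lemma \ref{FTH} to reduce matters to $\mathcal{F}_1\Phi$, writes $\mathcal{M}_\phi\Phi$ via Plancherel, handles the weight $\langle x\rangle^k$ by integrating $(1-\Delta_\xi)^{k/2}e^{-ix\xi}$ by parts in $\xi$, and extracts $\beta!^{\rho_1}$ from the identity $|\xi|^{|\beta|}=a^{-|\beta|}|a\xi|^{|\beta|}$ absorbed into the decay of $\hat\phi$; the surplus $a^{|q|-|\beta|-1}\le a^{|q|}+a^{-|\beta|-1}$ is then paid for by the seminorms of $\mathcal{F}_1\Phi$ in $a$, exactly mirroring your $\sup_a a^{|\alpha_2|-|\gamma|}e^{-h(a^{1/\tau_1}+a^{-1/\tau_2})}$ step on the physical side. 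Your version (integration by parts in $y$ onto $\phi^{(\gamma)}$, binomial expansion of $x^\alpha$ against the $b$-decay of $\Phi$ and the decay of $\phi$) buys a more transparent treatment of well-definedness and of the vanishing moments — point (ii), which the paper leaves implicit in the statement that the image lands in $(\mathcal{S}^s_t)_0$ — and it avoids Lemma \ref{FTH} altogether; the paper's Fourier route keeps the whole computation inside the $p$-seminorm bookkeeping of Remark \ref{h constants}, which is what makes the bilinear continuity claim immediate. Two small points you should make explicit if you write this up: (1) Proposition \ref{prop2} as stated is only a qualitative equivalence (``finite for some $h$ iff finite for some $h$''), so for continuity you must track the quantitative dependence of $h$ and of the bound through its proof rather than just cite it; (2) when you combine $a^{-|\gamma|}$ (large near $0$) with $a^{|\alpha_2|}$ (large near $\infty$) under a single $da/a$ integral you should bound $a^{|\alpha_2|-|\gamma|}\le a^{|\alpha_2|}+a^{-|\gamma|}$ and observe that the resulting sum is dominated by the product $(|\alpha_2|!)^{\tau_1}(|\gamma|!)^{\tau_2}$ since both factors are $\ge 1$, while reserving a fixed fraction of the exponential decay for integrability — this is the same device as the paper's $a^{|q|}+a^{-|\beta|-1}$.
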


\begin{remark} \label{remark C and S}
{\em 1.} It will be seen from the proof of Theorem \ref{th4} that a more general statement holds true. In fact, ${\mathcal M}$
can actually be extended to a continuous mapping from
$ {\mathcal S}^{\rho_1} _ {\rho_2}
(\mathbb{R}^n) \times {\mathcal S}^{\tau}_{t, t - \rho_2, s - \rho_1} (\mathbb{H}^{n+1})$
or from
$ {\mathcal S}^{\rho_1} _ {\rho_2} (\mathbb{R}^n) \times {\mathcal S}^{s}_{t,t - \rho_2, \tau}
(\mathbb{H}^{n+1}) $ to $ {\mathcal S}^{s}_{t} (\mathbb{R}^n) $.
However, we will only use wavelets with all vanishing moments in the rest of this article.

\par

{\em 2.}
The continuity properties from Theorem \ref{th4} a) and  b) provide
information about high regularity and the decay properties of $ \mathcal{M}_{\phi}
\Phi $. In the notation of Gelfand-Shilov spaces the
upper index is related to Gevrey ultra-differentiability while the lower index is
related to the decay of a function. Note that when $s=1$, the function $ \mathcal{M}_{\phi}
\Phi $ is real analytic and if $0<s<1$, it extends to an entire function on $\mathbb{C}^{n}$. The index $t$ gives subexponential decay at rate $e^{-h|x|^{1/t}}$, for some $h>0$. In Theorem \ref{th4} a) the
regularity of the image $ \mathcal{M}_{\phi} \Phi $ is measured in
terms of the regularity of the wavelet $\phi $ and the decay of $ \Phi $ when $a>0 $ tends to zero, while Theorem \ref{th4} b) shows that  the regularity of $ \Phi $ is
preserved under the action of the synthesis operator. Similarly, the
decay of $ \mathcal{M}_{\phi} \Phi $ at infinity is related to the
corresponding decays of $\phi $ and $\Phi $ in both Theorem
\ref{th4} a) and b).
\end{remark}

\par

The importance of the wavelet synthesis operator follows from the
fact that it can be used to construct a left inverse for the wavelet
transform, whenever the wavelet possesses nice reconstruction
properties. We end this subsection with a necessary and sufficient condition for such
property to hold in the context of Gelfand-Shilov spaces.

We start with some terminology. We say that a wavelet $ \psi \in {\mathcal S}_{0}
(\mathbb{R}^n)$ admits a reconstruction wavelet $ \phi \in
{\mathcal S}_0 (\mathbb{R}^n) $ if
$$ c_{\psi,\phi}(\omega)= \int^{\infty}_{0}
\overline{\hat{\psi}}(r\omega) \hat{\phi}(r\omega)
\frac{\mathrm{d}r}{r}, \;\;
\omega\in\mathbb{S}^{n-1}, $$
is finite, non-zero, and independent of the direction  $\omega\in\mathbb{S}^{n-1}$. In such a case we write $c_{\psi,\phi}:=c_{\psi,\phi}(\omega)$.

\par

For example, if $ \psi \in \mathcal S_0 (\mathbb{R}^n) $ is
non-trivial and rotation invariant, then it is its own
reconstruction wavelet. In fact, the existence of  a reconstruction wavelet is equivalent to
\emph{non-degenerateness} in the sense of the following definition (see \cite[Proposition 5.1]{vindas-pilipovic-2}).

\begin{definition} \label{non-degenerate}
(\cite{vindas-pilipovic,vindas-pilipovic-2}) A test function $\varphi\in\mathcal{S}
(\mathbb{R}^n)$ is said to be \emph{non-degenerate} if for any
$\omega\in \mathbb{S}^{n-1}$ the function
$R_{\omega}(r)=\hat{\varphi}(r\omega), $ $ r \in [0,\infty)$ is not
identically zero, that is, $ \operatorname*{supp}
R_{\omega}\neq\emptyset, $ for each $ \omega\in\mathbb{S}^{n-1}. $
If in addition $ \mu_0 (\varphi) = 0, $ then $\varphi$ is called a
non-degenerate wavelet.
\end{definition}

\par

We can now state the reconstruction formula for the wavelet transform (cf. \cite[Theorem 14.0.2]{hol1}). If $\psi\in\mathcal{S}_{0}(\mathbb{R}^{n})$ is non-degenerate and $\phi\in\mathcal{S}_{0}(\mathbb{R}^{n})$ is a reconstruction wavelet for it, then
\begin{equation}
\label{reconstruction}
\varphi= \frac{1}{c_{\psi,\phi}}  \mathcal{M}_{\phi}
\mathcal{W}_{\psi} \varphi, \: \: \forall \varphi \in  {\mathcal S}(\mathbb{R}^n).
\end{equation}

We are interested in wavelets in Gelfand-Shilov spaces.
The ensuing proposition shows that if the non-degenerate wavelet $\psi$ possesses higher regularity properties, then it is possible to choose a reconstruction wavelet with the same regularity as $\psi$.

\par

\begin{proposition} \label{lemma1}
Let $ \psi \in ({\mathcal S}^{\rho_1} _ {\rho_2})_0
(\mathbb{R}^n), $ $ \rho_1 > 0,$ $ \rho_2 > 1$, be non-degenerate. Then, it admits a
reconstruction wavelet $ \phi$ such that $\phi\in ({\mathcal S}^{\tau} _ {\rho_2})_0 (\mathbb{R}^n)$, $\forall \tau>0$.
\end{proposition}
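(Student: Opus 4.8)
The plan is to construct the reconstruction wavelet $\phi$ through its Fourier transform, via the ansatz
$$
\hat{\phi}(\xi) = \hat{\psi}(\xi)\,\frac{\eta(|\xi|)}{H(\xi/|\xi|)}, \qquad \xi \neq 0, \qquad \hat{\phi}(0) = 0,
$$
where $\eta \geq 0$ is a one-dimensional cut-off to be chosen and
$$
H(\omega) = \int_0^\infty |\hat{\psi}(s\omega)|^2\,\eta(s)\,\frac{\mathrm{d}s}{s}, \qquad \omega \in \mathbb{S}^{n-1}.
$$
Since $|r\omega|=r$ and $(r\omega)/|r\omega|=\omega$, this choice gives, for every $\omega \in \mathbb{S}^{n-1}$,
$$
c_{\psi,\phi}(\omega) = \int_0^\infty \overline{\hat{\psi}(r\omega)}\,\hat{\phi}(r\omega)\,\frac{\mathrm{d}r}{r} = \frac{1}{H(\omega)}\int_0^\infty |\hat{\psi}(r\omega)|^2\,\eta(r)\,\frac{\mathrm{d}r}{r} = 1,
$$
so that $\phi := \mathcal{F}^{-1}\hat{\phi}$ is automatically a reconstruction wavelet with $c_{\psi,\phi}=1$, \emph{provided} the integrals make sense and $H$ is everywhere positive. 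Everything therefore reduces to choosing $\eta$ so that (i) $H(\omega)>0$ for all $\omega$, and (ii) $\hat{\phi}$ is a nonzero element of $\mathcal{S}^{\rho_2}(\mathbb{R}^n)$ supported away from the origin; I will then check that (ii) forces $\phi \in (\mathcal{S}^{\tau}_{\rho_2})_0(\mathbb{R}^n)$ for every $\tau>0$.

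For (i), the decisive observation — and the conceptual heart of the argument — is that non-degeneracy of $\psi$ (Definition \ref{non-degenerate}) yields a \emph{uniform in the direction} statement: there is a compact interval $[a,b]\subset(0,\infty)$ such that for every $\omega\in\mathbb{S}^{n-1}$ one has $\hat{\psi}(r\omega)\neq 0$ for some $r\in[a,b]$. Indeed, $(\omega,r)\mapsto|\hat{\psi}(r\omega)|$ is continuous on $\mathbb{S}^{n-1}\times(0,\infty)$; for each $\omega$ non-degeneracy provides $r_\omega>0$ with $\hat{\psi}(r_\omega\omega)\neq 0$, hence an open neighbourhood $V_\omega\times(r_\omega-\delta_\omega,r_\omega+\delta_\omega)\subset\mathbb{S}^{n-1}\times(0,\infty)$ on which $\hat{\psi}$ does not vanish, and a finite subcover of $\mathbb{S}^{n-1}$ by the $V_\omega$'s produces the desired $[a,b]$. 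Then it suffices to fix $0<a'<a\leq b<b'<\infty$ and a nonnegative Gevrey-$\rho_2$ function $\eta$ on $[0,\infty)$ with $\{\eta>0\}=(a',b')$ (such $\eta$ exist because $\rho_2>1$): for each $\omega$ the integrand of $H(\omega)$ is strictly positive on a nonempty open subinterval of $(a',b')$, so $H(\omega)>0$; since $H$ is continuous on the compact sphere, $\min_{\mathbb{S}^{n-1}}H>0$.

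It remains to carry out the regularity bookkeeping for (ii). Recall $\hat{\psi}\in\mathcal{S}^{\rho_2}_{\rho_1}(\mathbb{R}^n)$ (Fourier isomorphism applied to $\psi\in(\mathcal{S}^{\rho_1}_{\rho_2})_0(\mathbb{R}^n)$). By construction $\hat{\phi}$ is supported in the annulus $\{a'\leq|\xi|\leq b'\}$, so $\partial^m\hat{\phi}(0)=0$ for all $m$, i.e.\ $\mu_m(\phi)=0$ for all $m\in\mathbb{N}^n$; moreover $\hat\phi$ is not identically zero. On that annulus $\xi\mapsto\xi/|\xi|$ is real analytic with bounded derivatives, $H$ is Gevrey-$\rho_2$ on $\mathbb{S}^{n-1}$ (differentiation under the integral sign, using the Gevrey-$\rho_2$ bounds for $\hat{\psi}$ and that the integration runs over a compact subset of $(0,\infty)$) and bounded below by a positive constant, hence $1/H$ is again Gevrey-$\rho_2$; since $\eta(|\xi|)$ and $\hat{\psi}(\xi)$ are Gevrey-$\rho_2$ as well, $\hat{\phi}$ is a nonzero compactly supported Gevrey-$\rho_2$ function, so $\hat{\phi}\in\mathcal{S}^{\rho_2}(\mathbb{R}^n)$ with compact support. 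Then $\phi=\mathcal{F}^{-1}\hat{\phi}$ satisfies $\sup_{x}e^{h|x|^{1/\rho_2}}|\phi(x)|<\infty$ for some $h>0$ (integrate by parts against $e^{ix\xi}$ and use the Gevrey bounds on the compactly supported $\hat{\phi}$, then optimise the resulting inequalities $|x^m\phi(x)|\lesssim C^{|m|}m!^{\rho_2}$), whereas trivially $\sup_{\xi}e^{h|\xi|^{1/\tau}}|\hat{\phi}(\xi)|<\infty$ for \emph{every} $\tau>0$ and every $h>0$ because $\hat{\phi}$ has compact support. By the characterisation of Gelfand--Shilov spaces in \cite{chchki96}, this gives $\phi\in\mathcal{S}^{\tau}_{\rho_2}(\mathbb{R}^n)$, and combined with the vanishing of all moments, $\phi\in(\mathcal{S}^{\tau}_{\rho_2})_0(\mathbb{R}^n)$, for every $\tau>0$. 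Finiteness of $H(\omega)$ and $c_{\psi,\phi}(\omega)$ is clear since the integrands are continuous and compactly supported in $(0,\infty)$, so the computation of $c_{\psi,\phi}$ is justified and $\phi$ is a reconstruction wavelet for $\psi$.

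The only genuinely delicate steps are the uniform-support lemma underlying (i) — which is where non-degeneracy is really used and which makes a compactly supported cut-off $\eta$ admissible — and the verification that $\xi\mapsto 1/H(\xi/|\xi|)$ is Gevrey-$\rho_2$ on the annulus; the latter is unproblematic precisely because we have arranged $\hat{\phi}$ to be supported away from the origin, where the map $\xi\mapsto\xi/|\xi|$ is analytic with bounded derivatives.
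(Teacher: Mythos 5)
Your construction is exactly the one in the paper: the paper also defines $\hat{\phi}(\xi)=\eta(\xi)\hat{\psi}(\xi)/g(\xi/|\xi|)$ with $g(\omega)=\int_0^\infty\eta(r)|\hat{\psi}(r\omega)|^2\,\frac{dr}{r}$, a Gevrey-$\rho_2$ radial cut-off $\eta$ supported in an annulus chosen (via the same uniform-in-direction consequence of non-degeneracy) so that $g>0$, and then invokes the Gevrey stability of $1/g$ and analyticity of $\xi\mapsto\xi/|\xi|$ off the origin to conclude $\hat{\phi}\in\bigcap_{\tau>0}\mathcal{S}_\tau^{\rho_2}$ with compact support away from $0$, hence $\phi\in(\mathcal{S}^\tau_{\rho_2})_0$ for all $\tau>0$ and $c_{\psi,\phi}=1$. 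Your proof is correct and supplies slightly more detail than the paper on the compactness argument and on the inversion step (which the paper handles by citing Komatsu's lemma on Gevrey classes), but it is essentially the same argument.
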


\begin{proof}
The proof is similar to that of \cite[Proposition 5.1]{vindas-pilipovic-2}.
However, here the reconstruction wavelet should satisfy additional regularity properties.

Since $\psi$ is non-degenerate, then, by Definition \ref{non-degenerate}, there exist
$0<r_{1}<r_{2}$ such that $ \operatorname*{supp} \, \hat{\psi}(r\omega)
\cap [r_{1},r_{2}]\neq\emptyset$, $\forall \omega\in \mathbb{S}^{n-1}$.

The condition $ \rho_2 > 1$ implies that the space $ {\mathcal S}_{\tau} ^ {\rho_2}(\mathbb{R})$ is non-quasianalytic
for every $ \tau > 0 $.
Let $\eta\in  \bigcap_{\tau>0} {\mathcal S}_{\tau} ^ {\rho_2}(\mathbb{R})$
be a compactly supported nonnegative rotation-invariant function with $0\notin \operatorname*{supp} \eta$
and $\eta(\xi)=1$ for $r_{1}\leq |\xi|\leq r_{2}$.

Consider the auxiliary function
$$
g(\omega)=\int_{0}^{\infty}\eta(r)|\hat{\psi}(r\omega)|^{2}\frac{\mathrm{d}r}{r} > 0,
\;\;\;  \omega\in \mathbb{S}^{n-1}.
$$
A straightforward computation shows that
$$
\sup_{\omega\in \mathbb{S}^{n-1} } | \partial ^{\alpha} g (\omega) | \leq C h^{|\alpha|} (\alpha !)^{\rho_2},
\;\;\; \alpha \in \mathbb{N}^n.
$$
In fact, $g\in \mathcal{E}^{\{(\alpha!)^{\rho_{2}}\}}(\mathbb{S}^{n-1})$,
the Gevrey class of $\{(\alpha!)^{\rho_{2}}\}$-ultra-differentiable functions
on the unit sphere $\mathbb{S}^{n-1}$, see \cite{K} for the definition.
Then, by employing an atlas on $\mathbb{S}^{n-1} $ consisting of functions from
$ \mathcal{E}^{\{(\alpha!)^{\rho_{2}}\}}(\mathbb{S}^{n-1})$ and
\cite[Lemma 1]{K2}, once concludes that
$ 1/g \in \mathcal{E}^{\{(\alpha!)^{\rho_{2}}\}}(\mathbb{S}^{n-1})$, i.e.,
the partial derivatives of $1/g $ satisfy the same decay properties as those of $g$.
Moreover, by \cite[Theorem 8.2.4]{Horm} and since the function $ \omega: \xi \mapsto \xi/|\xi| $ is analytic
off the origin, it follows that $1/g(\xi/|\xi|) \in \mathcal{E}^{\{(\alpha!)^{\rho_{2}}\}} $ away from
the origin.

Finally we define the reconstruction wavelet via its Fourier transform as follows.
Set $\hat{\phi}(\xi):= \eta (\xi)\hat{\psi}(\xi)/g(\xi/|\xi|)$.
It is a compactly supported function, all of its partial derivatives vanish at the origin
and $\hat{\phi}\in \cap_{\tau>0} {\mathcal S}_{\tau} ^ {\rho_2}(\mathbb{R}^n)$.
Therefore $\phi\in ({\mathcal S}^{\tau} _ {\rho_2})_0 (\mathbb{R}^n)$, $\forall \tau>0$.
Moreover, by construction $c_{\psi,\phi}=c_{\psi,\phi}(\omega)=1$. This completes the proof.

\end{proof}

\par

Next we give an example of a non-degenerate wavelet from $(\mathcal{S}^{\rho_{1}}_{\rho_2})_{0}(\mathbb{R}^{n})$.

\begin{example} Assume that $ \rho_1 > 0$ and $ \rho_2 > 1$.
Let $e_j = (0,0,\dots,1,\dots,0), $ with $1$ at the $j-$th
coordinate, and let $ B_{\pm j} = B (\pm
\frac{1}{2} e_j, \frac{1}{2}), $ $ j = 1,2,\dots,n$ denote the closed
balls centered at $ \pm \frac{1}{2} e_j$ with radius $\frac{1}{2} $. Since the class $\mathcal{S}^{\rho_{2}}_{\rho_1}(\mathbb{R}^{n})$ is non-quasiananalytic, it contains compactly supported functions. Set $ \hat \psi=\underset{j\neq 0}{\sum_{j=-n}^{n}  \hat
\phi_{j}}$, where the $ \hat \phi_{\pm j}\in \mathcal{S}^{\rho_{2}}_{\rho_1}(\mathbb{R}^{n})$ are
functions supported by $B_{\pm j} $, $ j = 1,2,\dots,n$,
respectively, and positive in its interior. Then the function $\psi$, the inverse Fourier transform of $\hat \psi$, is an example of
non-trivial non-degenerate wavelet from $ ({\mathcal S}^{\rho_1}_{\rho_2})_0
(\mathbb{R}^n)$.
\end{example}

\par

\subsection{The wavelet transform of tempered ultradistributions}
We start with a useful growth estimate for the wavelet
transform of an ultradistribution. Recall, the wavelet transform of
an  ultradistribution $f$ with respect to the test function $ \psi $ is
given by \eqref{Wavelet} whenever the dual pairing  is well defined.

\par

\begin{proposition} \label{Calderon 2}
Let $ {\rho_1}, {\rho_2} >0, $ $ {\rho_1} + {\rho_2} \geq 1$, $ s> {\rho_1}$ and $ t>{\rho_2}$.
If $ \psi \in {\mathcal S}^{{\rho_1}}_{{\rho_2}} (\mathbb{R}^n) $ and $ f \in
({\mathcal S}^s_t (\mathbb{R}^n))', $ then for  every $ k > 0$,
$$
|{\mathcal W}_{\psi} f(b, a)| \lesssim
 e^{k \big( a^{\frac{1}{t - {\rho_2}}} + (\frac{1}{a})^{\frac{1}{s - {\rho_1}}}
+ |b|^{\frac{1}{t}}  \big)}, \;\;\; (b,a) \in \mathbb{H}^{n+1}.
$$

\end{proposition}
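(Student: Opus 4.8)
The plan is to read $\mathcal{W}_{\psi}f(b,a)=\langle f(x),\psi_{b,a}(x)\rangle$, where $\psi_{b,a}(x)=a^{-n}\bar\psi((x-b)/a)$, and to control the pairing through the continuity of $f$. Since $\psi\in\mathcal{S}^{\rho_1}_{\rho_2}(\mathbb{R}^n)$ and $\rho_1<s$, $\rho_2<t$, each $\psi_{b,a}$ lies in $\mathcal{S}^{s}_{t}(\mathbb{R}^n)$. The crucial structural remark is that $\mathcal{S}^{s}_{t}(\mathbb{R}^n)$ carries the inductive-limit topology, over the parameter $h$, of the Banach spaces normed by the $p_h^{s,t}$ of \eqref{norma-GS}; continuity of $f$ is therefore continuity on every step, so for \emph{every} $h>0$ there is $C_h>0$ with $|\langle f,\varphi\rangle|\le C_h\, p_h^{s,t}(\varphi)$ for all $\varphi$ with $p_h^{s,t}(\varphi)<\infty$. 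The freedom to take $h$ arbitrarily small is exactly what will produce the quantifier ``for every $k>0$'' in the conclusion.

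Next I would estimate $p_h^{s,t}(\psi_{b,a})$. Writing $y=(x-b)/a$, one has $\partial_x^{\beta}\psi_{b,a}(x)=a^{-n-|\beta|}\overline{\psi^{(\beta)}}(y)$, while $|x^{\alpha}|\le|x|^{|\alpha|}\le 2^{|\alpha|}\big(|b|^{|\alpha|}+a^{|\alpha|}|y|^{|\alpha|}\big)$. I would use the exponential-type description of $\mathcal{S}^{\rho_1}_{\rho_2}(\mathbb{R}^n)$ --- there is $\mu>0$ with $|\psi^{(\beta)}(y)|\le C\mu^{-|\beta|}\beta!^{\rho_1}e^{-\mu|y|^{1/\rho_2}}$ for all $y,\beta$ --- together with the elementary bound $\sup_{r\ge0}r^{m}e^{-\mu r^{1/\rho_2}}\lesssim B^{m}\,(m!)^{\rho_2}$ and the two-sided estimate $\sum_{j\ge0}\eta^{j}/(j!)^{r}\lesssim e^{(r+\varepsilon)\eta^{1/r}}$ recalled in the proof of Proposition \ref{prop2}. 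Splitting $|x|^{|\alpha|}$ into the $|b|^{|\alpha|}$-part and the $a^{|\alpha|}|y|^{|\alpha|}$-part, using $|\alpha|!\le n^{|\alpha|}\alpha!$, and summing the resulting series in $\beta$ (exponent $s-\rho_1$) and in $\alpha$ (exponents $t$ and $t-\rho_2$, respectively) --- all positive by the hypotheses $s>\rho_1$, $t>\rho_2$ --- one is led to
\[
p_h^{s,t}(\psi_{b,a})\ \lesssim\ a^{-n}\exp\!\Big(C_1(h)\,|b|^{1/t}+C_2(h)\,a^{1/(t-\rho_2)}+C_3(h)\,a^{-1/(s-\rho_1)}\Big),
\]
where each $C_i(h)$ is a fixed constant times a positive power of $h$, hence $C_i(h)\to0$ as $h\to0^{+}$.

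Combining the two displays gives $|\mathcal{W}_{\psi}f(b,a)|\le C_h\, p_h^{s,t}(\psi_{b,a})$, bounded by a constant (depending on $h$) times $a^{-n}\exp(C_1(h)|b|^{1/t}+C_2(h)a^{1/(t-\rho_2)}+C_3(h)a^{-1/(s-\rho_1)})$. Given $k>0$, I would choose $h$ so small that $C_i(h)\le k/2$ for $i=1,2,3$, and then absorb the polynomial factor $a^{-n}$ into $e^{(k/2)a^{-1/(s-\rho_1)}}$; this is legitimate for all $a>0$, since a power of $1/a$ is dominated by any stretched exponential of $1/a$ (and $a^{-n}\le1$ for $a\ge1$). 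This yields $|\mathcal{W}_{\psi}f(b,a)|\lesssim e^{k(a^{1/(t-\rho_2)}+a^{-1/(s-\rho_1)}+|b|^{1/t})}$, and since $k>0$ was arbitrary the proposition follows.

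The conceptual crux is the structural point of the first paragraph: recognizing that the Roumieu (inductive-limit) nature of $\mathcal{S}^{s}_{t}$ permits arbitrarily small $h$ in the continuity estimate for $f$, which is precisely what converts a single qualitative growth bound into the whole family indexed by $k$. The main technical nuisance is the bookkeeping in the second paragraph --- verifying that the constants multiplying each of the three exponential terms are of the form $(\mathrm{const})\,h^{\gamma}$ with $\gamma>0$ (so that they vanish with $h$), while matching the three Gelfand--Shilov-type summations to the correct exponents $1/t$, $1/(t-\rho_2)$ and $1/(s-\rho_1)$.
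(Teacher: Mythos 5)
Your proposal is correct and follows essentially the same route as the paper: both exploit the Roumieu (inductive-limit) structure to get $|\langle f,\varphi\rangle|\le C_h\,p_h^{s,t}(\varphi)$ for \emph{every} $h>0$, then estimate the seminorm of $a^{-n}\bar\psi((\cdot-b)/a)$ so that the three exponential factors in $|b|^{1/t}$, $a^{1/(t-\rho_2)}$ and $a^{-1/(s-\rho_1)}$ appear with constants that vanish as $h\to0^{+}$. The only cosmetic difference is that the paper works with the equivalent exponential-weight seminorm $e^{c|ax+b|^{1/t}}$ and optimizes $c(ar)^{1/t}-Ar^{1/\rho_2}$ explicitly, whereas you keep the polynomial weights and sum the series; the bookkeeping is equivalent.
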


\begin{proof}
Since $ \psi \in {\mathcal S}^{{\rho_1}}_{{\rho_2}} (\mathbb{R}^n)$,
$$
| \partial^\beta \psi (x) | \lesssim \tilde h^{-|\beta|}( \beta ! )^{\rho_1}
e^{-A |x|^{1/\rho_2}}, \;\;\; x \in \mathbb{R}^n, \beta\in \mathbb{N}^n,
$$
for some $\tilde{h},A > 0$. For every $ h > 0 $ there exists $ C_h > 0 $ such
that:
$$
|{\mathcal W}_{\psi} f(b, a)| \leq C_h \, p^{s, t}_h
\left(\frac{1}{a^n} \bar{\psi} \Big(\frac{\: \cdot \: - b}{a}\Big)\right),
\;\;\; (b,a) \in \mathbb{H}^{n+1}.
$$
So we have (for every $c>0$):
$$
|{\mathcal W}_{\psi} f(b, a)| \lesssim \frac{1}{a^n} \sup_{x \in
\mathbb{R}^n, \beta \in \mathbb{N}^n} \frac{h^{|\beta|}}{\beta!^s}
e^{c |ax + b|^{\frac{1}{t}}} \frac{1}{a^{|\beta|}}
|\psi^{(\beta)} (x)|
$$
$$
= \frac{1}{a^n} \sup_{x \in \mathbb{R}^n, \beta \in \mathbb{N}^n}
\frac{\tilde h^{|\beta|} |\psi^{(\beta)} (x)|}{\beta!^{{\rho_1}}}
e^{A |x|^{\frac{1}{{\rho_2}}}} \Big(\frac{h}{\tilde h a}\Big)^{|\beta|}
\frac{1}{\beta!^{s - {\rho_1}}} e^{c |ax + b|^{\frac{1}{t}} - A
|x|^{\frac{1}{{\rho_2}}}},
$$
where $\tilde h, A > 0$. Therefore,
$$
|{\mathcal W}_{\psi} f(b, a)| \lesssim
 p^{{\rho_1}, {\rho_2}}_{l} (\psi) \,
 e^{(s-{\rho_1}) (\frac{h}{\tilde h a})^{\frac{1}{s - {\rho_1} }}} \,
 e^{c |b|^{\frac{1}{t}}}
\, \sup_{x \in \mathbb{R}^n} (e^{c |ax|^{\frac{1}{t}} - A
|x|^{\frac{1}{{\rho_2}}}}),
$$
for some $l>0$. Since $ g(r) := c ( a \cdot r)^{\frac{1}{t}} - A r^{\frac{1}{{\rho_2}}},
\, r > 0 $ attains its  maximal value at $ (\frac{{\rho_2} c}{t
A})^{\frac{t {\rho_2}}{t - {\rho_2}}} a^{\frac{{\rho_2}}{t - {\rho_2}}} $, and since we
may chose arbitrary $h>0$ and $c>0$, it follows that
$$
|{\mathcal W}_{\psi} f(b, a)| \lesssim e^{k
\big((\frac{1}{a})^{\frac{1}{s - {\rho_1} }} + a^{\frac{1}{t - {\rho_2}}} +
|b|^{\frac{1}{t}}  \big)},
$$
for every $k>0$.
\end{proof}

\par

\begin{remark}
\label{remark bounded}
Naturally, if $\psi\in (\mathcal{S}_{\rho_2}^{\rho_1})_{0}(\mathbb{R}^{n})$, then Proposition \ref{Calderon 2}
remains valid for  $f\in ((\mathcal{S}_{t}^{s})_{0}(\mathbb{R}^{n}))'$. Furthermore,
if $\mathcal{B}'\subset (\mathcal{S}_{t}^{s}(\mathbb{R}^{n}))'$ is a bounded set
(resp. $\mathcal{B}'\subset ((\mathcal{S}_{t}^{s})_{0}(\mathbb{R}^{n}))'$
when $\psi\in (\mathcal{S}_{\rho_2}^{\rho_1})_{0}(\mathbb{R}^{n})$)), then the conclusion of
Proposition \ref{Calderon 2} holds uniformly for $f\in \mathcal{B}'$, as follows from
the Banach-Steinhaus theorem.
\end{remark}

Next, we give an alternative definition of the wavelet transform of an ultradistribution via duality.

\par

\begin{definition} \label{def1}
Let $\rho_{1}>0$, $ t > \rho_2>1 $, $s>0$ and $\tau>0$.
If $ \psi \in ({\mathcal S}^{\rho_1}_{\rho_2})_0 (\mathbb{R}^n) $
and $ f \in (({\mathcal S}^s_{t} )_0 (\mathbb{R}^n))'$
then the wavelet transform $ {\mathcal W}_{\psi} f$  of $ f $ with respect to
the wavelet $ \psi$ is defined as
\begin{equation} \label{eq35}
\langle {\mathcal W}_{\psi} f(b, a), \Phi(b, a) \rangle := \langle
f(x), {\mathcal M}_{\bar{\psi}} \Phi (x) \rangle, \: \: \Phi \in {\mathcal S}^s_{t, t - \rho_2, \tau}(\mathbb{H}^{n+1}).
\end{equation}
Thus,
$  {\mathcal W}_{\psi} : (({\mathcal S}^s_{t})_0 (\mathbb{R}^n))'
\to ({\mathcal S}^s_{t, t - \rho_2, \tau}(\mathbb{H}^{n+1}))'$ is continuous for the strong dual topologies.
\end{definition}

\par

By Theorem \ref{th4}  b), the transposition in \eqref{eq35} is well defined. Note that we have freedom of
the choice of $\tau$. This fact will be crucial below. If we assume that $s>\rho_1$, then the choice $\tau=s-\rho_1$ leads to the continuous mapping $  {\mathcal W}_{\psi} : (({\mathcal S}^s_{t})_0 (\mathbb{R}^n))'
\to ({\mathcal S}^s_{t, t - \rho_2, s-\rho_1}(\mathbb{H}^{n+1}))'$.
The next result shows the consistency between Definition \ref{def1} and (\ref{Wavelet}) for this choice of $\tau$.

\begin{proposition}
\label{proposition consistency} Assume that $s > \rho_1>0$ and $ t > \rho_2>1 $. Let $ f \in (({\mathcal S}^s_{t} )_0 (\mathbb{R}^n))'$ and $ \psi \in ({\mathcal S}^{\rho_1}_{\rho_2})_0 (\mathbb{R}^n) $. Then, for every $\Phi \in {\mathcal S}^s_{t, t - \rho_2, s-\rho_1}(\mathbb{H}^{n+1})$,
\begin{equation}
\label{equation consistency}
\langle
f(x), {\mathcal M}_{\bar{\psi}} \Phi (x) \rangle= \int_{0}^{\infty}\int_{\mathbb{R}^{n}}\mathcal{W}_{\psi}f(b,a)\Phi(b,a)\:\frac{dbda}{a}.
\end{equation}
\end{proposition}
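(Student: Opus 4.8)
The plan is to realize $\mathcal{M}_{\bar\psi}\Phi$ as a vector-valued (Bochner) integral with values in $(\mathcal{S}^s_t)_0(\mathbb{R}^n)$ and then to commute the continuous functional $f$ with the integral sign; identity \eqref{equation consistency} is then nothing but the defining property of that integral. First note that its left-hand side is meaningful: $\bar\psi\in(\mathcal{S}^{\rho_1}_{\rho_2})_0(\mathbb{R}^n)$, so by Theorem \ref{th4} b) applied with $\tau=s-\rho_1$ one has $\mathcal{M}_{\bar\psi}\Phi\in(\mathcal{S}^s_t)_0(\mathbb{R}^n)$, a space on which $f$ acts. Moreover, by Remark \ref{remark bounded} one has $|\mathcal{W}_\psi f(b,a)|\lesssim e^{k(a^{1/(t-\rho_2)}+a^{-1/(s-\rho_1)}+|b|^{1/t})}$ for every $k>0$, while Proposition \ref{prop2} (with $\alpha=0$) applied to $\Phi\in\mathcal{S}^s_{t,t-\rho_2,s-\rho_1}(\mathbb{H}^{n+1})$ gives $|\Phi(b,a)|\lesssim e^{-\tilde h(a^{1/(t-\rho_2)}+a^{-1/(s-\rho_1)}+|b|^{1/t})}$ for some $\tilde h>0$; taking $k=\tilde h/2$ shows that the integrand on the right-hand side of \eqref{equation consistency} is absolutely integrable over $\mathbb{H}^{n+1}$ with respect to $\mathrm{d}b\,\mathrm{d}a/a$.

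For $(b,a)\in\mathbb{H}^{n+1}$ put $\varphi_{b,a}(x):=a^{-n}\bar\psi((x-b)/a)$ and $\Theta(b,a):=\Phi(b,a)\,\varphi_{b,a}$. Every dilate and translate of $\psi$ inherits all vanishing moments, and since $s>\rho_1$, $t>\rho_2$ we have $\varphi_{b,a}\in(\mathcal{S}^{\rho_1}_{\rho_2})_0(\mathbb{R}^n)\subset(\mathcal{S}^s_t)_0(\mathbb{R}^n)$, so $\Theta\colon\mathbb{H}^{n+1}\to(\mathcal{S}^s_t)_0(\mathbb{R}^n)$. The crux of the proof is a bound on a \emph{single, fixed} seminorm of $\Theta$. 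Using the exponential description of $\mathcal{S}^s_t(\mathbb{R}^n)$ recalled in Section \ref{Sec1}, together with $|\psi(y)|\lesssim e^{-A|y|^{1/\rho_2}}$ and $|\hat\psi(\eta)|\lesssim e^{-A|\eta|^{1/\rho_1}}$ for some $A>0$, and the elementary optimizations
$$
c\,a^{1/t}|y|^{1/t}-A|y|^{1/\rho_2}\lesssim c^{\,t/(t-\rho_2)}a^{1/(t-\rho_2)},\qquad
c\,a^{-1/s}|\eta|^{1/s}-A|\eta|^{1/\rho_1}\lesssim c^{\,s/(s-\rho_1)}a^{-1/(s-\rho_1)}
$$
(whose suprema over $y$, $\eta$ are finite precisely because $t>\rho_2$ and $s>\rho_1$), one checks that for all sufficiently small $h>0$
$$
p^{s,t}_h(\varphi_{b,a})\lesssim e^{\varepsilon(h)\left(a^{1/(t-\rho_2)}+a^{-1/(s-\rho_1)}+|b|^{1/t}\right)},\qquad(b,a)\in\mathbb{H}^{n+1},
$$
with $\varepsilon(h)\to0$ as $h\to0^+$. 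Fixing $h=h_1$ so small that $\varepsilon(h_1)<\tilde h$ and combining with the decay of $\Phi$ above, we obtain $p^{s,t}_{h_1}(\Theta(b,a))\lesssim e^{-(\tilde h-\varepsilon(h_1))(a^{1/(t-\rho_2)}+a^{-1/(s-\rho_1)}+|b|^{1/t})}$, which is integrable over $\mathbb{H}^{n+1}$ with respect to $\mathrm{d}b\,\mathrm{d}a/a$.

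Let $X_{h_1}:=\{\varphi\in C^\infty(\mathbb{R}^n):p^{s,t}_{h_1}(\varphi)<\infty\}$, a Banach space with norm $p^{s,t}_{h_1}$ that includes continuously into $\mathcal{S}^s_t(\mathbb{R}^n)$. The orbit map $(b,a)\mapsto\varphi_{b,a}$ is continuous into $X_{h_1}$ (joint continuity of translation and dilation on Gelfand--Shilov spaces), hence $\Theta$ is continuous into $X_{h_1}$ and, by the previous paragraph, Bochner integrable over $(\mathbb{H}^{n+1},\mathrm{d}b\,\mathrm{d}a/a)$. Set $I:=\int_{\mathbb{H}^{n+1}}\Theta(b,a)\,\mathrm{d}b\,\mathrm{d}a/a\in X_{h_1}$. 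The point evaluations $\delta_x$ and the moment functionals $\mu_m$ are continuous on $X_{h_1}$, hence commute with $I$, so $I(x)=\int_{\mathbb{H}^{n+1}}\varphi_{b,a}(x)\,\Phi(b,a)\,\mathrm{d}b\,\mathrm{d}a/a=\mathcal{M}_{\bar\psi}\Phi(x)$ and $\mu_m(I)=\int_{\mathbb{H}^{n+1}}\mu_m(\varphi_{b,a})\,\Phi(b,a)\,\mathrm{d}b\,\mathrm{d}a/a=0$; thus $I=\mathcal{M}_{\bar\psi}\Phi\in(\mathcal{S}^s_t)_0(\mathbb{R}^n)$, in agreement with Theorem \ref{th4} b). Since $f$ restricts to a continuous linear functional on the Banach space $X_{h_1}\cap(\mathcal{S}^s_t)_0(\mathbb{R}^n)$, and Bochner integrals commute with continuous linear functionals,
$$
\langle f(x),\mathcal{M}_{\bar\psi}\Phi(x)\rangle=\langle f,I\rangle=\int_{\mathbb{H}^{n+1}}\langle f(x),\varphi_{b,a}(x)\rangle\,\Phi(b,a)\,\frac{\mathrm{d}b\,\mathrm{d}a}{a}=\int_0^{\infty}\!\!\int_{\mathbb{R}^n}\mathcal{W}_\psi f(b,a)\,\Phi(b,a)\,\frac{\mathrm{d}b\,\mathrm{d}a}{a},
$$
where the last equality is \eqref{Wavelet}. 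This proves \eqref{equation consistency}.

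The step I expect to be the main obstacle is the seminorm estimate of the second paragraph, namely that translating and dilating the wavelet enlarges its $\mathcal{S}^s_t$-seminorms by no more than a factor $e^{\varepsilon(a^{1/(t-\rho_2)}+a^{-1/(s-\rho_1)}+|b|^{1/t})}$ with $\varepsilon$ at our disposal: this growth must be governed by exactly the three exponents $1/(t-\rho_2)$, $1/(s-\rho_1)$, $1/t$ that, by Proposition \ref{prop2}, also control the decay of $\Phi$, so that $\Theta$ becomes integrable and the Bochner integral exists inside a single Banach step. It is precisely here that the hypotheses $t>\rho_2$ and $s>\rho_1$ enter; the rest is bookkeeping with the exponential characterization of $\mathcal{S}^s_t(\mathbb{R}^n)$ and with elementary properties of the Bochner integral.
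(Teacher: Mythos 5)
Your argument is correct, but it is not the route the paper takes. The paper's proof keeps everything scalar: it picks a sequence of regular functions $f_j\in\mathcal{S}_0(\mathbb{R}^n)$ converging to $f$ in $((\mathcal{S}^s_t)_0(\mathbb{R}^n))'$, applies the classical Fubini theorem to $\int f_j(x)\,\mathcal{M}_{\bar\psi}\Phi(x)\,dx$, and passes to the limit by dominated convergence, the dominating function being supplied by the decay of $\Phi$ (Proposition \ref{prop2}) together with the \emph{uniform in $j$} growth bound for $\mathcal{W}_\psi f_j$ obtained from Proposition \ref{Calderon 2} and the Banach--Steinhaus theorem (Remark \ref{remark bounded}). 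You instead keep $f$ fixed and realize $\mathcal{M}_{\bar\psi}\Phi$ as a vector-valued integral of $\Phi(b,a)\varphi_{b,a}$, then commute $f$ with the integral sign. The quantitative inputs are the same in both proofs---the decay of $\Phi$ from Proposition \ref{prop2} and the subexponential growth in $(b,a)$ of the $\mathcal{S}^s_t$-seminorms of $a^{-n}\bar\psi((\cdot-b)/a)$, which is precisely what the proof of Proposition \ref{Calderon 2} estimates---so the seminorm bound you single out as the main obstacle is indeed available. What your approach buys is that it bypasses the approximation of $f$ by regular functions (a density fact the paper invokes without comment) and Banach--Steinhaus; what it costs is the vector-integration bookkeeping. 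On that score one caveat: the orbit map $(b,a)\mapsto\varphi_{b,a}$ need not be norm-continuous into the \emph{same} Banach step $X_{h_1}$ in which its values lie, since translation and dilation typically force a small loss in the parameter $h$ in inductive limits of this type; this is harmless (form the Bochner or Gelfand--Pettis integral in $X_h$ for some $0<h<h_1$, into which $X_{h_1}$ embeds and which still includes continuously into $\mathcal{S}^s_t(\mathbb{R}^n)$, so $f$ still restricts continuously), but it should be said rather than asserted as joint continuity into $X_{h_1}$ itself.
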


\begin{proof}
Fix $\Phi \in {\mathcal S}^s_{t, t - \rho_2, s-\rho_1}(\mathbb{H}^{n+1})$. Proposition \ref{prop2} implies that there is $h>0$ such that
\begin{equation}
\label{eqextra1}
|\Phi(b,a)|\lesssim e^{-h\left(a^{\frac{1}{t-\rho_{2}}} +
a^{-\frac{1}{s-\rho_1}} + |b|^{\frac{1}{t}} \right)}, \;\;\; (b,a) \in \mathbb{H}^{n+1}.
\end{equation}
Let $\{f_{j}\}_{j=0}^{\infty}$ be a sequence such that $f_j\to f$ in $(({\mathcal S}^s_{t} )_0 (\mathbb{R}^n))'$ and $f_{j}\in \mathcal{S}_{0}(\mathbb{R}^{n})$, for every $j\in\mathbb{N}$. In view of Proposition \ref{Calderon 2} (cf. Remark \ref{remark bounded}),
\begin{equation}
\label{eqextra2}
|{\mathcal W}_{\psi} f_{j}(b, a)| \lesssim
 e^{\frac{h}{2} \big(  a^{\frac{1}{t - {\rho_2}}} + (\frac{1}{a})^{\frac{1}{s - {\rho_1}}}
+ |b|^{\frac{1}{t}}  \big)}, \;\;\; (b,a) \in \mathbb{H}^{n+1},
\end{equation}
uniformly in $j\in\mathbb{N}$. Fubini's theorem and the regularity of $f_{j}$ imply
\begin{equation}
\label{eqextra3}
\int_{\mathbb{R}^{n}}f_{j}(x) {\mathcal M}_{\bar{\psi}} \Phi (x) dx= \int_{0}^{\infty}\int_{\mathbb{R}^{n}}\mathcal{W}_{\psi}f_{j}(b,a)\Phi(b,a)\:\frac{dbda}{a}
\end{equation}
Noticing that $\mathcal{W}_{\psi}f_{j}(b,a)\to\mathcal{W}_{\psi} f(b,a) $ pointwisely,
the estimates (\ref{eqextra1}) and (\ref{eqextra2})
allow us to use the Lebesgue dominated convergence theorem in (\ref{eqextra3})
to conclude (\ref{equation consistency}).
\end{proof}

\par

We also introduce the the wavelet synthesis transform of an ultradistribution on $\mathbb{H}^{n+1}$ via a duality approach. The consistency of the following definition is ensured by Theorem \ref{th3} (cf. Remark \ref{rk2}).

\begin{definition} \label{def2}
Let  $ \rho_1 > 0,$ $ \rho_2 > 1 $, $s > 0$ and $ t > \rho_1 + \rho_2 $.
Let $ F \in ({\mathcal S}^s _{t, t - \rho_2, s+\rho_2 -1 }
(\mathbb{H}^{n+1}))' $
and $ \phi \in (\mathcal{S} ^{\rho_1 } _{\rho_2})_0 (\mathbb{R}^n) $.
The wavelet
synthesis transform $ {\mathcal M}_{\phi} F$ of $ F $ with respect
to the wavelet $ \phi $  is defined by
$$
\langle {\mathcal M}_{\phi} F(x), \varphi (x) \rangle := \langle
F(b, a), {\mathcal W}_{\overline{\phi}} \varphi (b, a) \rangle,
\;\;\; \varphi \in ({\mathcal S}^s _{t+1 - \rho_1 -\rho_2})_0 (\mathbb{R}^n).
$$
Thus, $ {\mathcal M}_{\phi} : ({\mathcal S}^s _{t, t - \rho_2, s+\rho_2 -1 }
(\mathbb{H}^{n+1}))' \to (({\mathcal S}^s _{t+1 - \rho_1 -\rho_2})_0 (\mathbb{R}^n))'$ is continuous.
\end{definition}

\par

We derive the following resolution of the identity mapping
$ \mathrm{Id}$ as an easy consequence of our previous results. In the next theorem we implicitly use the choice $\tau=s+\rho_{2}-1$ in Definition \ref{def1}.

\par

\begin{theorem} \label{th-res-identity}
Let  $ \rho_1 > 0,$ $ \rho_2 > 1 $, $s > 0$ and $ t > \rho_1 + \rho_2 $.
Let $\psi \in  (\mathcal{S}^{\rho_1}_{\rho_2})_0 (\mathbb{R}^n) $
be a non-degenerate wavelet and let $ \phi \in  (\mathcal{S}^{\rho_1}_{\rho_2})_0 (\mathbb{R}^n) $
be a reconstruction wavelet for it. Then the Calder\'{o}n reproducing formula
$$
\mathrm{Id} = \frac{1}{c_{\psi,\phi}} \mathcal{M}_{\phi}
\mathcal{W}_{\psi}
$$
holds in $ (({\mathcal S}^ s_{t})_0 (\mathbb{R}^n))' $.
\end{theorem}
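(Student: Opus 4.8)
The plan is to establish the Calder\'{o}n reproducing formula by transposing the reconstruction formula \eqref{reconstruction} that already holds on the level of test functions, and then arguing by density. First I would note that the composition $\mathcal{M}_{\phi}\mathcal{W}_{\psi}$ is well-defined as a continuous operator on $(({\mathcal S}^s_{t})_0(\mathbb{R}^n))'$: by Definition \ref{def1} with the choice $\tau = s+\rho_2-1$, the wavelet transform maps $\mathcal{W}_{\psi}: (({\mathcal S}^s_{t})_0(\mathbb{R}^n))' \to ({\mathcal S}^s_{t,t-\rho_2,s+\rho_2-1}(\mathbb{H}^{n+1}))'$ continuously, and by Definition \ref{def2} (whose consistency rests on Theorem \ref{th3} via Remark \ref{rk2}, using $t > \rho_1+\rho_2$) the synthesis operator maps $\mathcal{M}_{\phi}: ({\mathcal S}^s_{t,t-\rho_2,s+\rho_2-1}(\mathbb{H}^{n+1}))' \to (({\mathcal S}^s_{t+1-\rho_1-\rho_2})_0(\mathbb{R}^n))'$ continuously. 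Since $t+1-\rho_1-\rho_2 < t$ but still positive, the target space $({\mathcal S}^s_{t+1-\rho_1-\rho_2})_0$ contains $({\mathcal S}^s_t)_0$ as a dense subspace, so the composition lands (after the obvious restriction) continuously in $(({\mathcal S}^s_t)_0(\mathbb{R}^n))'$; I would spell out this identification carefully since it is where the index bookkeeping is delicate.

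Next I would verify the formula by testing against $\varphi \in ({\mathcal S}^s_t)_0(\mathbb{R}^n)$ and unwinding the duality definitions. Chaining Definition \ref{def2} and then Definition \ref{def1} gives
$$
\langle \mathcal{M}_{\phi}\mathcal{W}_{\psi} f, \varphi \rangle = \langle \mathcal{W}_{\psi} f, \mathcal{W}_{\overline{\phi}}\varphi \rangle = \langle f, \mathcal{M}_{\bar{\psi}}\mathcal{W}_{\overline{\phi}}\varphi \rangle,
$$
where in the last step I must check that $\mathcal{W}_{\overline{\phi}}\varphi$ genuinely lies in the space ${\mathcal S}^s_{t,t-\rho_2,\tau}(\mathbb{H}^{n+1})$ required for applying \eqref{eq35}; this follows from Theorem \ref{th3} (via Remark \ref{rk2}, with $\psi$ there replaced by $\overline{\phi}$ and the appropriate matching of parameters). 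Then it remains to identify $\mathcal{M}_{\bar{\psi}}\mathcal{W}_{\overline{\phi}}\varphi$. Here I would invoke the test-function reconstruction formula \eqref{reconstruction}: since $\psi$ is non-degenerate and $\phi$ is a reconstruction wavelet for it, one has $c_{\psi,\phi} = c_{\overline{\phi},\overline{\psi}}$ and $\mathcal{M}_{\bar{\psi}}\mathcal{W}_{\overline{\phi}}\varphi = c_{\psi,\phi}\,\varphi$ by the symmetry of the reconstruction relation (equivalently, $\overline{\phi}$ is non-degenerate and $\overline{\psi}$ is a reconstruction wavelet for $\overline{\phi}$ with the same constant). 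Hence $\langle \mathcal{M}_{\phi}\mathcal{W}_{\psi} f, \varphi\rangle = c_{\psi,\phi}\langle f,\varphi\rangle$ for all $\varphi$ in a dense subspace, which yields the claimed identity.

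The main obstacle I anticipate is the parameter matching and the consistency between the duality-based Definition \ref{def1}/Definition \ref{def2} and the integral formula: I must ensure that the choice $\tau = s+\rho_2-1$ is simultaneously compatible with the hypotheses of Theorem \ref{th3} (giving the right synthesis target space for $\mathcal{M}_{\phi}$ acting on $F$) and with Theorem \ref{th4} b) (making the transposition in \eqref{eq35} legitimate for $\mathcal{W}_{\psi}$ on ultradistributions). A second, more technical point is justifying that the pointwise/integral reconstruction formula \eqref{reconstruction}, originally stated for $\varphi \in \mathcal{S}(\mathbb{R}^n)$ with $\psi,\phi\in\mathcal{S}_0(\mathbb{R}^n)$, applies verbatim to our Gelfand-Shilov wavelets and produces an equality of functions in the required test space rather than merely in $\mathcal{S}_0$; this is essentially the content of Proposition \ref{proposition consistency} combined with the continuity theorems, but it warrants an explicit remark. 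Everything else is a routine dualization once these compatibility conditions are in place.
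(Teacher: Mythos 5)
Your argument is essentially the paper's own proof: chain Definitions \ref{def2} and \ref{def1}, apply the test-function reconstruction formula \eqref{reconstruction} to the pair $(\overline{\phi},\overline{\psi})$ with $c_{\overline{\phi},\overline{\psi}}=c_{\psi,\phi}$, and conclude by density. One slip to correct: the inclusion runs the other way --- since the Gelfand--Shilov scale increases with the lower index and $\rho_1+\rho_2>1$, one has $({\mathcal S}^s_{t+1-\rho_1-\rho_2})_0(\mathbb{R}^n)\subset({\mathcal S}^s_t)_0(\mathbb{R}^n)$ as a dense subspace, not the reverse; consequently the composition a priori lands in the \emph{larger} dual $(({\mathcal S}^s_{t+1-\rho_1-\rho_2})_0(\mathbb{R}^n))'$, and the duality chain is legitimate only for $\varphi$ in the smaller test space $({\mathcal S}^s_{t+1-\rho_1-\rho_2})_0(\mathbb{R}^n)$ (where Definition \ref{def2} and Remark \ref{rk2} actually apply), not for arbitrary $\varphi\in({\mathcal S}^s_t)_0(\mathbb{R}^n)$ as you first write. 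The identity in $(({\mathcal S}^s_t)_0(\mathbb{R}^n))'$ then follows exactly from the density you invoke in your closing sentence, so the structure of your proof is intact once this bookkeeping is straightened out.
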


\begin{proof}
Let $ f \in (({\mathcal S}^s_{t})_0 (\mathbb{R}^n))'. $
Since $ ({\mathcal S}^s _{t+1 - \rho_1 -\rho_2})_0 (\mathbb{R}^n) $
is dense in the space $ ({\mathcal S}^s_{t})_0 (\mathbb{R}^n) $,
it is enough to prove the identity for test functions
$ \varphi \in ({\mathcal S}^s _{t+1 - \rho_1 -\rho_2})_0 (\mathbb{R}^n).$
Then, by Definitions \ref{def2} and \ref{def1}, and the reconstruction formula \eqref{reconstruction}, it follows that
\begin{equation} \label{jednakosti}
\langle  \mathcal{M}_{\phi} \mathcal{W}_{\psi} f,  \varphi \rangle
=
\langle  \mathcal{W}_{\psi} f, \mathcal{W}_{\bar{\phi}} \varphi
\rangle = \langle f, \mathcal{M}_{\overline{\psi}}
\mathcal{W}_{\overline{\phi}} \varphi \rangle =
c_{\bar{\phi},\bar{\psi}} \langle f, \varphi \rangle.
\end{equation}
\end{proof}

\par

Combining Remark \ref{rk2}, Proposition \ref{proposition consistency}, and the relation (\ref{jednakosti}), we obtain an extension of the {\em desingularization} formula now in the context of ultradistributions
(cf. \cite{hol1,vindas-pilipovic-2} for the case of distributions).

\par

\begin{corollary}
\label{desingularization ultra} In addition to the assumptions of Theorem \ref{th-res-identity} suppose that $\sigma:=\rho_{1}+\rho_{2}-1<s$. Then,
$$
\langle f, \varphi \rangle = \frac{1}{c_{\psi,\phi}}
\int_0 ^\infty \int_{\mathbb{R}^n}
\mathcal{W}_\psi f (b,a)
\mathcal{W}_{\overline{\phi}} \varphi (b,a)  \frac{dbda}{a}, \: \: \forall \varphi\in (\mathcal{S}^{s-\sigma}_{t-\sigma})_{0}(\mathbb{R}^{n}).
$$
\end{corollary}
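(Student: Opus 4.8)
The plan is to read the identity off from the Calder\'on reproducing formula of Theorem \ref{th-res-identity} together with the consistency identity of Proposition \ref{proposition consistency}; the only genuine point to settle is that these two results are phrased with two different values of the auxiliary parameter $\tau$ in Definition \ref{def1}, and reconciling them is exactly what forces the extra hypothesis $\sigma<s$.

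First I would fix $\varphi\in(\mathcal{S}^{s-\sigma}_{t-\sigma})_{0}(\mathbb{R}^{n})$. This space is nontrivial: $s-\sigma>0$ by assumption and $t-\sigma=t+1-\rho_{1}-\rho_{2}>1$ because $t>\rho_{1}+\rho_{2}$; moreover $(\mathcal{S}^{s-\sigma}_{t-\sigma})_{0}(\mathbb{R}^{n})\subseteq(\mathcal{S}^{s}_{t+1-\rho_{1}-\rho_{2}})_{0}(\mathbb{R}^{n})\subseteq(\mathcal{S}^{s}_{t})_{0}(\mathbb{R}^{n})$, so $\langle f,\varphi\rangle$ is meaningful, and by the growth bound of Proposition \ref{Calderon 2} (in the form of Remark \ref{remark bounded}) combined with the decay estimate of Proposition \ref{prop2}, the double integral on the right-hand side is absolutely convergent, precisely as in the proof of Proposition \ref{proposition consistency}. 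For such $\varphi$, the first two equalities of the chain \eqref{jednakosti} (established in the proof of Theorem \ref{th-res-identity}, and already valid for $\varphi\in(\mathcal{S}^{s}_{t+1-\rho_{1}-\rho_{2}})_{0}(\mathbb{R}^{n})$) together with Theorem \ref{th-res-identity} itself give
\[
\langle f,\varphi\rangle=\frac{1}{c_{\psi,\phi}}\,\langle\mathcal{M}_{\phi}\mathcal{W}_{\psi}f,\varphi\rangle=\frac{1}{c_{\psi,\phi}}\,\langle f,\mathcal{M}_{\overline{\psi}}\mathcal{W}_{\overline{\phi}}\varphi\rangle .
\]

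The key step is to recognise that $\Phi:=\mathcal{W}_{\overline{\phi}}\varphi$ actually lies in $\mathcal{S}^{s}_{t,\,t-\rho_{2},\,s-\rho_{1}}(\mathbb{H}^{n+1})$, which is precisely the space required by Proposition \ref{proposition consistency}. Indeed, apply Theorem \ref{th3} with the target parameters $\tau_{1}=t-\rho_{2}$ and $\tau_{2}=s-\rho_{1}$: its hypotheses $\tau_{1}>\rho_{1}$ and $\tau_{2}>\rho_{2}-1$ read $t>\rho_{1}+\rho_{2}$ and $s>\rho_{1}+\rho_{2}-1=\sigma$, both in force, while $\min\{s,\tau_{2}-\rho_{2}+1\}=s-\sigma$ and $1-\rho_{1}+\min\{t-\rho_{2},\tau_{1}\}=t-\sigma$. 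Hence $\mathcal{W}\colon(\mathcal{S}^{\rho_{1}}_{\rho_{2}})_{0}(\mathbb{R}^{n})\times(\mathcal{S}^{s-\sigma}_{t-\sigma})_{0}(\mathbb{R}^{n})\to\mathcal{S}^{s}_{t,\,t-\rho_{2},\,s-\rho_{1}}(\mathbb{H}^{n+1})$ is continuous, and since $\overline{\phi}\in(\mathcal{S}^{\rho_{1}}_{\rho_{2}})_{0}(\mathbb{R}^{n})$ we get $\Phi\in\mathcal{S}^{s}_{t,\,t-\rho_{2},\,s-\rho_{1}}(\mathbb{H}^{n+1})$. Now Proposition \ref{proposition consistency} applies to this $\Phi$ and the wavelet $\psi$ — its remaining hypotheses $s>\rho_{1}$ and $t>\rho_{2}>1$ follow from $s>\sigma>\rho_{1}$ (recall $\rho_{2}>1$) and $t>\rho_{1}+\rho_{2}$ — and yields
\[
\langle f,\mathcal{M}_{\overline{\psi}}\mathcal{W}_{\overline{\phi}}\varphi\rangle=\int_{0}^{\infty}\int_{\mathbb{R}^{n}}\mathcal{W}_{\psi}f(b,a)\,\mathcal{W}_{\overline{\phi}}\varphi(b,a)\,\frac{db\,da}{a}.
\]
Substituting this into the previous display gives the assertion.

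The main obstacle is exactly the parameter mismatch just exploited: Theorem \ref{th-res-identity} is set up with the choice $\tau=s+\rho_{2}-1$ in Definition \ref{def1}, whereas the integral representation of Proposition \ref{proposition consistency} is only available for $\Phi$ in the space with $\tau=s-\rho_{1}$. Since the pairing $\langle\mathcal{W}_{\psi}f,\Phi\rangle=\langle f,\mathcal{M}_{\overline{\psi}}\Phi\rangle$ does not itself depend on $\tau$, the remedy is to shrink the class of test functions to $(\mathcal{S}^{s-\sigma}_{t-\sigma})_{0}(\mathbb{R}^{n})$, on which Theorem \ref{th3} delivers $\mathcal{W}_{\overline{\phi}}\varphi$ into the required space; this forces $\tau_{2}=s-\rho_{1}>\rho_{2}-1$, i.e.\ $\sigma<s$, and once that is granted the rest is a routine bookkeeping of indices against results already proved.
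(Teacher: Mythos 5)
Your proposal is correct and follows essentially the same route the paper intends: the chain \eqref{jednakosti} plus Proposition \ref{proposition consistency} applied to $\Phi=\mathcal{W}_{\overline{\phi}}\varphi$, with Theorem \ref{th3} (instantiated at $\tau_1=t-\rho_2$, $\tau_2=s-\rho_1$) guaranteeing that $\Phi$ lands in $\mathcal{S}^{s}_{t,\,t-\rho_{2},\,s-\rho_{1}}(\mathbb{H}^{n+1})$. You even make explicit the one detail the paper's one-line justification leaves implicit, namely that the consistency proposition requires the $\tau=s-\rho_1$ (not $\tau=s+\rho_2-1$) instance of the wavelet-transform mapping, which is precisely where the hypothesis $\sigma<s$ is needed.
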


\par

As an immediate consequence of Theorem \ref{th-res-identity} and
Theorem \ref{th4} b) we have the following regularity theorem for ultradistributions.

\begin{corollary} Let  $ \rho_1 > 0,$ $ \rho_2 > 1 $, $s > 0$ and $ t > \rho_1 + \rho_2 $.
Let $\psi \in  (\mathcal{S}_{\rho_2}^{\rho_1})_0 (\mathbb{R}^n) $ be non-degenerate
and let $f \in (({\mathcal S}^s _{t})_0 (\mathbb{R}^n))'$.
If $ \mathcal{W}_{\psi} f \in
\mathcal{ S}^s _{t, t-\rho_2, \tau} (\mathbb{H}^{n+1})$ for some $
\tau > 0$ then $ f \in ({\mathcal S}^s_{t})_{0} (\mathbb{R}^n) $.
\end{corollary}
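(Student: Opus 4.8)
The plan is to combine the Calder\'on reproducing formula of Theorem \ref{th-res-identity} with the mapping properties of the synthesis operator in Theorem \ref{th4} b): the hypothesis upgrades $\mathcal W_\psi f$ from a mere functional on $\mathcal S^s_{t,t-\rho_2,\tau}(\mathbb H^{n+1})$ to an honest element of that test space, and on such functions $\mathcal M_\phi$ coincides with the explicit integral operator which, by Theorem \ref{th4} b), maps into $(\mathcal S^s_t)_0(\mathbb R^n)$.

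First I would invoke Proposition \ref{lemma1}: since $\psi\in(\mathcal S^{\rho_1}_{\rho_2})_0(\mathbb R^n)$ is non-degenerate and $\rho_2>1$, it admits a reconstruction wavelet $\phi$, which may be taken in $(\mathcal S^{\rho_1}_{\rho_2})_0(\mathbb R^n)$. The parameter restrictions $\rho_1>0$, $\rho_2>1$, $s>0$, $t>\rho_1+\rho_2$ are precisely those assumed in the present statement, so Theorem \ref{th-res-identity} applies and gives
$$
f=\frac{1}{c_{\psi,\phi}}\,\mathcal M_\phi\mathcal W_\psi f\qquad\text{in }\bigl((\mathcal S^s_t)_0(\mathbb R^n)\bigr)'.
$$

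Next I would show that the right-hand side is a genuine function on $\mathbb R^n$. By hypothesis $\mathcal W_\psi f\in\mathcal S^s_{t,t-\rho_2,\tau}(\mathbb H^{n+1})$ for some $\tau>0$, so Proposition \ref{prop2} provides $h>0$ with $|\mathcal W_\psi f(b,a)|\lesssim e^{-h(a^{1/(t-\rho_2)}+a^{-1/\tau}+|b|^{1/t})}$ on $\mathbb H^{n+1}$; in particular $\langle\mathcal W_\psi f,\Phi\rangle=\int_0^\infty\int_{\mathbb R^n}\mathcal W_\psi f(b,a)\Phi(b,a)\,\frac{dbda}{a}$ for every test function $\Phi$ on $\mathbb H^{n+1}$. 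For $\varphi\in(\mathcal S^s_{t+1-\rho_1-\rho_2})_0(\mathbb R^n)$ one has the crude bound $|\mathcal W_{\overline\phi}\varphi(b,a)|\le\|\phi\|_{L^1}\|\varphi\|_{L^\infty}$, so the above decay estimate makes the iterated integral over $(b,a)$ and $x$ absolutely convergent. Unwinding Definition \ref{def2}, writing out both wavelet transforms as integrals and interchanging the order of integration then yields
$$
\langle\mathcal M_\phi\mathcal W_\psi f,\varphi\rangle=\langle\mathcal W_\psi f,\mathcal W_{\overline\phi}\varphi\rangle=\int_{\mathbb R^n}\bigl(\mathcal M_\phi\mathcal W_\psi f\bigr)(x)\,\varphi(x)\,dx,
$$
where in the last term $\mathcal M_\phi\mathcal W_\psi f$ now stands for the function defined by the integral formula for the wavelet synthesis transform introduced before Theorem \ref{th4}. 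Since $(\mathcal S^s_{t+1-\rho_1-\rho_2})_0(\mathbb R^n)$ is dense in $(\mathcal S^s_t)_0(\mathbb R^n)$, the ultradistribution $\mathcal M_\phi\mathcal W_\psi f$ equals this function.

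Finally, since $t>\rho_1+\rho_2>\rho_2$, $\phi\in(\mathcal S^{\rho_1}_{\rho_2})_0(\mathbb R^n)$ and $\mathcal W_\psi f\in\mathcal S^s_{t,t-\rho_2,\tau}(\mathbb H^{n+1})$, Theorem \ref{th4} b) gives $\mathcal M_\phi\mathcal W_\psi f\in(\mathcal S^s_t)_0(\mathbb R^n)$; combined with the displayed reconstruction formula this forces $f\in(\mathcal S^s_t)_0(\mathbb R^n)$, as claimed. The only delicate point is the consistency step, i.e.\ verifying that the duality-defined operator of Definition \ref{def2}, evaluated at a genuine test function on the upper half-space, returns the classical integral synthesis operator; this is precisely the Fubini-type argument used in the proof of Proposition \ref{proposition consistency}, and it relies on the subexponential decay of $\mathcal W_\psi f$ furnished by Proposition \ref{prop2} together with the almost exponential decay of the wavelets $\psi$ and $\phi$.
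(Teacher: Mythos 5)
Your argument is exactly the paper's intended one: the paper gives no proof beyond declaring the corollary ``an immediate consequence of Theorem \ref{th-res-identity} and Theorem \ref{th4} b)'', and you combine precisely these two ingredients (via Proposition \ref{lemma1} for the reconstruction wavelet). Your additional care about identifying the duality-defined synthesis operator of Definition \ref{def2} with the integral operator when $\mathcal W_\psi f$ is an honest test function on $\mathbb H^{n+1}$ is a consistency point the paper leaves implicit, and your Fubini-type treatment of it, modeled on Proposition \ref{proposition consistency}, is appropriate.
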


\section{Proofs of main results} \label{Sec3}

This section collects the proofs of  Theorems \ref{th3} and
\ref{th4}.

\begin{remark} \label{enlarge h}
On several occasions we will use the following facts. If $ \varphi
\in \mathcal{S}_{{\rho_2}} ^{{\rho_1}} (\mathbb{R}^n) $, $ {\rho_1} + {\rho_2} \geq 1$,
so that $ p^{{\rho_1}, {\rho_2}} _{h_0} (\varphi) < \infty$ for some $h_0 >0$,
then there exists $h_1>0$ such that
\begin{equation} \label{integral}
\sup_{\alpha, \beta \in \mathbb{N} ^n} \frac{ h_1 ^{|\alpha +
\beta|} }{ \alpha !^{\rho_2} \beta !^{\rho_1}} \int | x |^{|\alpha|} |\varphi
^{(\beta)} (x)| dx \lesssim p^{{\rho_1}, {\rho_2}} _{h_0} (\varphi).
\end{equation}
In addition, if  \eqref{integral} holds, then there exists $h_2>0$ such
that
$$
\sup_{\alpha, \beta \in \mathbb{N} ^n} \frac{ h_2 ^{|\alpha +
\beta|} }{ \alpha!^{\rho_2} \beta!^{\rho_1}}
 | x |^{|\alpha|} | \varphi ^{(\beta)} (x) | < \infty.
$$
We will omit the parts of the proofs where these arguments
appear. We will often make use of the fact that multiplication by $
|\cdot |^{|\alpha|} $ (or by $\langle \cdot \rangle^{|\alpha|} $)
simply enlarges the corresponding constants $h>0$.
We will also use, without explicit reference, the estimate
$$
\sup_{\beta \in \mathbb{N}^n, x \in \mathbb{R}^n} \frac{h_0
^{|\beta|} (fg)^{(\beta + \alpha)}(x)}{\beta!^s } \lesssim
\sup_{\beta\in \mathbb{N}^n, x \in \mathbb{R}^n } \frac{h_1
^{|\beta|} f^{(\beta)}(x)}{\beta!^s } \sup_{\beta\in \mathbb{N}^n, x
\in \mathbb{R}^n } \frac{h_1 ^{|\beta|} g^{(\beta)} (x)}{\beta!^s }
$$
for some $ h_1 = h_1 (\alpha) > 0 $. Finally, we shall need the following form of the reminder term in the Taylor formula
$$
(R_{\alpha,m} f)(x,y) =  \sum_{|\alpha| = m}
\frac{m(y-x)^{\alpha}}{\alpha!} \int_0 ^1 (1-\theta)^{m-1}
 f^{(\alpha)} (x + \theta(y-x))d\theta.
$$
\end{remark}

\subsection{Proof of Theorem \ref{th3}} Let
$ \varphi $ and $\psi $ satisfy
\begin{equation} \label{fi-psi-norme}
p^{\min\{s, \tau_2 - \rho_2+1 \}, 1 - \rho_1 + \min\{t  - \rho_2, \tau_1 \}} _{h } (\varphi)
p^{\rho_1, \rho_2}_{h} (\psi) < \infty.
\end{equation}
We will show that there exists $h_0>0$ such that $ p_{\alpha, h_0}^{s,
t, \tau_1, \tau_2} (\mathcal{W}_\psi \varphi) <\infty$, that is, the supremum
of
$$
J = \frac{h_0 ^{|\beta| +k+ l_1 + l_2} (a^{l_1} + a^{-l_2}) \langle
b \rangle^k |  \partial_a ^{\alpha} \partial_b ^{\beta} \mathcal{W}_\psi
\varphi (b,a) |}{ \beta!^s k!^{t} l_1 !^{\tau_1}  l_2 !^{\tau_2}}
$$
over
$$ ((b, a), (k, l_1, l_2), \beta) \in \Lambda = \mathbb{H}^{n+1} \times \mathbb{N}^3 \times
\mathbb{N}^n
$$
is finite for some $h_0>0$. Without loss of generality we assume from now on that $ \alpha = 0$.

\par

\begin{remark} \label{h constants}
In the following steps of the proof we start with $h_0>0$ and in
every step determine a new constant $h_1 \leq  h_2 \leq \dots \leq
h_7$ which successively depends on the previous one, that is, $ h_m $ depends on
$ h_{m-1}$, $ m = 1,2,\dots,7$ and $h_7$ should be equal to  $h>0$ in
\eqref{fi-psi-norme}.
Then, by going in the opposite direction, we determine   $h_6 $ from $h_7$,
$h_5 $ from $ h_6 $, $ \dots ,$ and $ h_0$ from $h_1$. In such way for the constant $h>0$ given in
\eqref{fi-psi-norme} we find $h_0 > 0 $ so that
$$
p^{s, t, \tau_1, \tau_2} _{0,h_0} (\mathcal{W}_\psi \varphi) = \sup_{\Lambda}
J \lesssim p_{h}^{\min\{s, \tau_2 - \rho_2+1\}, 1 - \rho_1 + \min\{t  -
\rho_2, \tau_1 \}} \, (\varphi) p^{\rho_1, \rho_2}
_{h} (\psi),
$$
which will prove the Theorem.
\end{remark}

\par

We will estimate
$$
J_1 = \frac{h_0 ^{|\beta| +2k+ 2l_1} (1 + a^{2l_1}) \langle b
\rangle^{2k} | \partial_b ^{\beta} \mathcal{W}_\psi \varphi (b,a) |}{ \beta!^s
k!^{2t} l_1 !^{2\tau_1}}
$$
over $ ((b, a), (k, l_1), \beta) \in \Lambda_1 = \mathbb{H}^{n+1}
\times \mathbb{N}^2 \times \mathbb{N}^n,$ and
$$
J_2 = \frac{h_0 ^{|\beta| + 2l_2}  (1 + a^{-2l_2}) | \partial_b
^{\beta} \mathcal{W}_\psi \varphi (b,a) |}{ \beta!^s  l_2 !^{2\tau_2}}
$$
over $ ((b, a), (k, l_2), \beta) \in \Lambda_2 = \mathbb{H}^{n+1}
\times \mathbb{N}^2 \times \mathbb{N}^n$.

\par

Since
$$
\frac{h_0 ^{2(|\beta| +k+ l_1 + l_2)} (a^{l_1} + a^{-l_2})^{2}\langle b
\rangle^{2k} | \partial_b ^{\beta} \mathcal{W}_\psi \varphi (b,a) |^2}{ \beta!^{2s}
k!^{2t} l_1 !^{2\tau_1}  l_2 !^{2\tau_2}} \lesssim \sup_{\Lambda_1}
J_1  \sup_{\Lambda_2} J_2
$$
we would have
$$
p^{s,t, \tau_1, \tau_2} _{0,h_0} (\mathcal{W}_\psi \varphi) \lesssim
\sqrt{\sup_{\Lambda_1} J_1 \sup_{\Lambda_2} J_2}.
$$
We will show that there exists $h_7 >0$ which depends on $h_0>0$
such that
$$
\sup_{\Lambda_1} J_1 \lesssim p_{h_7}^{s, 1 - \rho_1 + \min\{t  - \rho_2,
\tau_1 \}} (\varphi) \, p^{\rho_1, \rho_2}_{h_7}
(\psi)
$$
and
$$
\sup_{\Lambda_2} J_2 \lesssim p_{h_7}^{\min\{s, \tau_2 - \rho_2 +1\},t}
(\varphi) \, p^{\rho_1, \rho_2} _{h_7} (\psi).
$$

\par

We first estimate $\sup_{\Lambda_1} J_1$. There exists $ h_1 = h_1
(h_0) $  such that
$$
J_1 \lesssim \frac{h_1 ^{|\beta| +2k+ 2l_1} (1 + a^{2l_1}) \langle b
\rangle^{2k}}{ \beta!^s (2k)!^{t} (2l_1) !^{\tau_1} \langle b
\rangle^{2k}} \left| \int_{\mathbb{R}^{n}} e^{i\xi b} (1-\triangle_\xi)^k (\xi^{\beta}
\hat{\varphi} (\xi) \overline{\hat{\psi}} (a\xi)) d\xi \right|
$$
$$
= \frac{h_1 ^{|\beta| +2k+ 2l_1} }{ \beta!^s (2k)!^{t} (2l_1)
!^{\tau_1}} \left| \int_{\mathbb{R}^{n}} e^{i\xi b} \sum_{|\gamma| \leq 2k} c_\gamma
\partial ^\gamma _\xi (\xi^{\beta} \hat{\varphi} (\xi) (1+ a^{2l_1})
\overline{\hat{\psi}} (a\xi)) d\xi \right|
$$

$$
\lesssim \frac{h_1 ^{|\beta| +2k+ 2l_1} }{ \beta!^s (2k)!^{t} (2l_1)
!^{\tau_1}}
 \sum_{|\gamma| \leq 2k} |c_\gamma | \sum_{i+j \leq \gamma} |\tilde c_{i,j}|
\int_{\mathbb{R}^{n}}   |\partial ^i (\xi^{\beta} \hat{\varphi} (\xi))|   a^{|j|}
(1+a^{2l_1})|\hat{\psi} ^{(j)} (a\xi)| d\xi,
$$
where $ c_\gamma $ and $ \tilde c_{i,j} $ are correspondent binomial
coefficients. As already noticed, by the use of Leibniz rule, the
binomial coefficients simply increase the constant $h_1$ so that
$$
J_1 \lesssim \sup \frac{h_2 ^{|\beta| +2k+ 2l_1} }{ \beta!^s
(2k)!^{t} (2l_1) !^{\tau_1}} \sum_{|i+j|\leq 2k}(I_1 + I_2),
$$
for some  $ h_2 = h_2 (h_1) >0$ which does not depend on $\beta, k$
and $l_1$, where
$$
I_1 = \int_{|\xi| \leq 1}    |\partial ^i (\xi^{\beta} \hat{\varphi}
(\xi))|   a^{|j|}(1+a^{ 2l_1})  |\hat{\psi} ^{(j)} (a\xi)| d\xi,
$$
and
$$
I_2 = \int_{|\xi| \geq 1}   |\partial ^i (\xi^{\beta} \hat{\varphi}
(\xi))|   a^{|j|}(1+a^{ 2l_1})  |\hat{\psi} ^{(j)} (a\xi)| d\xi,
$$
and the supremum is taken over $\beta, k$ and $l_1$.
\par

By Remarks \ref{remark equivalent sequences} and \ref{enlarge h} it
follows that there exists $ h_3 = h_3(h_2) > 0$, which does not
depend on $\beta, i,j$ and $l_1$,  such that
$$
\frac{h_2 ^{|\beta| + |i|+|j| + 2l_1} }{ \beta!^s |i|!^{t} |j|!^{t}
(2l_1) !^{\tau_1}} I_2
$$
$$
\lesssim \frac{h_2 ^{|\beta| + |i|+|j| + 2l_1} }{ \beta!^s |i|!^{t}
|j|!^{t} (2l_1) !^{\tau_1}}
 \int_{|\xi| \geq 1}   | \partial ^i
(\xi^{\beta} \hat{\varphi} (\xi))| |\xi|^{n} a^{n}  |a\xi|^{|j|-n}(1+|a\xi|^{2l_1}) |
\hat{\psi} ^{(j)} (a\xi)| d\xi
$$
$$
\lesssim \sup_{} \frac{h_3 ^{|\beta| + |i|} }{ \beta!^s |i|!^{t} }\langle \xi \rangle ^{n} |
\partial ^i (\xi^{\beta} \hat{\varphi} (\xi))   |\sup_{} \frac{h_3
^{|j| + 2l_1} }{  |j|!^{t} (2l_1) !^{\tau_1}} \int_{\mathbb{R}^{n}}
|x|^{|j|-n} (1+|x|^{2l_1})| \hat{\psi} ^{(j)} (x)| dx
$$
$$
\lesssim p_{h_3 } ^{s, t} (\varphi) p^{\min\{t  - \rho_2,
\tau_1\},  \rho_2} _{h_3} (\psi),
$$
where the suprema are taken over $\beta, i,j, l_1 $ and $ \xi $, and
we have splited $|j|!^t$ into $|j|!^{t -\rho_2} $ and  $|j|!^{\rho_2} $.

\par

From the above calculations we conclude that there exists $ h_3> 0$
such that
$$
\sup_{\Lambda_1} \frac{h_2 ^{|\beta| +2k+ 2l_1} }{ \beta!^s
(2k)!^{t} (2l_1) !^{\tau_1}}  I_2 \lesssim p_{h_3 } ^{s, t}
(\varphi) p^{\min\{t  - \rho_2, \tau_1\},  \rho_2} _{h_3}
(\psi).
$$

\par

Next, we estimate the term with $I_1$:
$$
I_1 \lesssim \sum_{p\leq i} {i \choose p} \frac{\beta!}{p!} \left |
\int_{|\xi| \leq 1} \xi^{\beta - p} \hat \varphi ^{(p)} (\xi)
a^{|j|}(1+a^{2l_1}) \overline{\hat{\psi}} ^{(j)} (a\xi) d\xi \right |
$$
$$
= \sum_{p\leq i} {i \choose p} \frac{\beta!}{p!} \Big | \int_{|\xi|
\leq 1} \xi^{\beta - p} \sum_{|r|= 2l_1 + |j|-n} \frac{(2l_1 + |j|)
\xi^{r}}{r!} \cdot I \cdot a^{|j|} (1+a^{2l_1})
\overline{\hat{\psi}}^{(j)}  (a\xi) d\xi \Big |
$$
where $ I = \int_0 ^1 (1-\theta)^{2l_1 + |j|-n-1} \hat \varphi ^{(p +
r)} (\theta \xi) d\theta$, and we have used Taylor's formula for $ \hat
\varphi $ and the vanishing moments of $ \varphi$.

\par

Since $ \displaystyle \sum_{|r|= 2l_1 + |j|-n} \frac{1 }{r!} \lesssim \frac{c ^{2l_1
+ |j|}}{(2l_1)!|j|!} $ for some $c>0$ and binomial coefficients just
increase the constant $h_2$, we obtain
$$
\frac{h_2 ^{|\beta| +2k+ 2l_1} }{ \beta!^s (2k)!^{t} (2l_1)
!^{\tau_1}} I_1
$$
$$
\lesssim \sup \frac{ h_4 ^{|\beta| +|i|+|j| + 2l_1} }{ \beta!^s
|i|!^{t} |j|!^{t+1} (2l_1) !^{\tau_1+1}}|\hat \varphi ^{(i+ r)} (x) |\int_{|\xi|\leq1}
a^{n}(1+|a\xi|^{|j|+2l_1-n}) |\hat{\psi} ^{(j)}  (a\xi)| d\xi
$$
$$
\lesssim\sup h_4 ^{|\beta| +|i|+|j| + 2l_1} \frac{
|\hat \varphi ^{(i+ r)} (x) |}{ \beta!^s |i|!^{t} |j|!^{t-
\rho_1 -\rho_2 +1} (2l_1) !^{\tau_1+1-\rho_1}}  \int_{\mathbb{R}^{n}} \frac{\langle \xi\rangle^{|j|+
2l_1-n} |{\hat\psi} ^{(j)}  (\xi)|}{|j|!^{\rho_1 }
(2l_1) !^{\rho_1}|j|!^{\rho_2 } } d\xi
$$
with $|r|= 2l_1 + |j|$, the suprema taken over $\beta, i, j,
l_1$ and $x$, and where $h_4  >0 $ does not depend on $\beta, i,
j, l_1$. Moreover, we may choose $ h_4 \geq
h_3$.

\par

By Remarks \ref{remark equivalent sequences} and \ref{enlarge h} and
similar arguments to those used in the estimates of $I_2$ it follows
that there exists $h_5 = h_5 (h_4)>0$ (which does not depend on
$\beta, k$ and $ l_1$) such that
$$
\frac{h_2 ^{|\beta| +2k+ 2l_1} }{ \beta!^s (2k)!^{t} (2l_1)
!^{\tau_1}} I_1 \lesssim p^{s, 1- \rho_1+ \min\{t  - \rho_2, \tau_1 \}} _{h_5} (\varphi) \,
p^{\rho_1, \rho_2} _{h_5}(\psi).
$$

Since the sequence $h_0,  h_1,\dots, h_5 $ is non-decreasing, we
conclude that
$$
\sup_{\Lambda_1} J_1 \lesssim p^{s, 1- \rho_1+  \min\{t  - \rho_2,
\tau_1 \}}_{h_5} (\varphi) \, p^{\rho_1, \rho_2}_{h_5} (\psi).
$$

\par

It remains to estimate $ \sup_{\Lambda_2} J_2 $. We now use
Taylor's formula for $ \varphi $ and the vanishing moments of $ \psi $
to obtain, for $a<1$,
$$
J_2 = \frac{h_0 ^{|\beta| + 2l_2}  (1 + a^{-2l_2}) }{ \beta!^s
(2 l_2)!^{\tau_2}} \times
$$
$$
\times \left|\int_{\mathbb{R}^{n}} \left(  \sum_{|r| = 2l_2} \frac{2l_2}{r!} \left(\int_0
^1 (1-\theta)^{2l_2-1} \varphi^{(\beta + r)} (b+ \theta a x )
d\theta \right) a^{2l_2} x^r \overline{\psi} (x) \right) dx\right|
$$
$$
\lesssim \frac{h_6 ^{|\beta| + 2l_2} }{ \beta!^s (2l_2) !^{\tau_2 +
1}} \sup_{x \in \mathbb{R}^n} \max_{|r|=2l_{2}} |\varphi^{(\beta + r)} (x) |
\sup_{x \in \mathbb{R}^n} | \langle x \rangle ^{r+n+1} \overline{\psi}  (x) |
$$
$$
\lesssim p^{\min\{s, \tau_2 - \rho_2 +1\}, t} _{h_7} (\varphi) \,
p^{\rho_1,  \rho_2}_{h_7} (\psi),
$$
for some $h_6 \geq h_5 $ and $ h_7 = h_7 (h_6)$. When $a\geq1$, we employ a similar argument (Taylor's formula is not needed for this case).

\par

Thus, we choose $h_7 = h >0 $ for which
$$
p^{\min\{s, \tau_2 - \rho_2 +1\}, 1- \rho_1+ \min\{t  - \rho_2, \tau_1 \}} _{h } (\varphi)
\, p^{\rho_1, \rho_2}_{h} (\psi) < \infty.
$$
Now, reasoning as in Remark \ref{h constants}, we determine for given $h_7 = h$ the corresponding
$ h_0 > 0 $  so that
$$
p^{s, t, \tau_1, \tau_2} _{0, h_0} (\mathcal{W}_\psi \varphi)\lesssim
p^{\min\{s, \tau_2 - \rho_2 +1\}, 1- \rho_1+ \min\{t  - \rho_2, \tau_1 \}} _{h_7 } (\varphi) \, p^{\rho_1,  \rho_2}_{h_7} (\psi)
< \infty,
$$
which proves the Theorem.

\par

\subsection{Proof of Theorem \ref{th4}} We may again assume that $ \alpha = 0$.

a) Let $h>0$ be chosen so that
$$
p^{\tau,t, t - \rho_2, s - \rho_1}_{0, h} (
\Phi) \, p^{\rho_1, \rho_2}_{h} (\phi) < \infty.
$$
By Lemma \ref{FTH} it is enough to prove that there exists $h_0>0$
such that
\begin{equation} \label{eq9}
p^{s,t}_{h_0} ({\mathcal M}_{\phi} \Phi) \lesssim p^{t, \tau,
t - \rho_2, s - \rho_1}_{0, h} ({\mathcal F}_1 \Phi) \,
p^{\rho_1, \rho_2}_{ h} (\phi).
\end{equation}

\par

Let $  \beta \in \mathbb{N}^n$ and $k \in \mathbb{N} $.  We may assume that $k$ is even. Then
$$
\langle x \rangle^{k} \left|\partial_x^{\beta} \big({\mathcal M}_{\phi}
\Phi (x)\big)\right| = \langle x \rangle^{k} \left|\partial_x^{\beta}
\int_{\mathbb{R}_+} \int_{\mathbb{R}^n} \Phi (b, a) \frac{1}{a^n}
\phi \left(\frac{x - b}{a}\right) \, db \, \frac{da}{a}\right|
$$
$$
\lesssim \langle x \rangle^{k} \left|\int_{\mathbb{R}_+}
\int_{\mathbb{R}^n}
\partial_b^{\beta} \Phi (x - b, a) \frac{1}{a^n} \phi \left(\frac{b}{a}\right) \, db \, \frac{da}{a}\right|
$$
$$
\lesssim \langle x \rangle^{k} \left|\int_{\mathbb{R}_+}
\int_{\mathbb{R}^n}   e^{-i x \xi} \xi^{\beta} \hat{\Phi} (-\xi, a)
\hat{\phi} (a \xi) \, d\xi \, \frac{da}{a}\right|
$$
$$
\lesssim \langle x \rangle^{k} \left|\int_{\mathbb{R}_+}
\int_{\mathbb{R}^n} \frac{(1 - \Delta_{\xi})^{\frac{k}{2}} e^{-i x
\xi}}{ \langle x \rangle^{k} } \xi^{\beta} \hat{\Phi} (-\xi, a)
\hat{\phi} (a \xi) \, d\xi \, \frac{da}{a}\right|
$$

$$
\lesssim \sum_{|r| + |q| \leq k}  \int_{\mathbb{R}_+}
\int_{\mathbb{R}^n}
\partial_{\xi}^r (\xi^{\beta} \hat{\Phi} (-\xi, a))
\partial_{\xi}^{q} (\hat{\phi} (a \xi))| \, d\xi \, \frac{da}{a}
\lesssim  \sum_{|r| + |q| \leq k} \, I,
$$
where
$$
I = \int_{\mathbb{R}_+} \int_{\mathbb{R}^n} a^{|q|} \,
|\xi|^{|\beta|} \, | \partial_{\xi}^r \hat{\Phi} (-\xi, a)| |\hat{\phi}^{(q)}
(a \xi)| \, d\xi\, \frac{da}{a}.
$$

\par

We use again Remark \ref{enlarge h}
in a similar way as it was done in the proof of Theorem \ref{th3}.
In the corresponding steps of the proof we enlarge $h_0 > 0 $ and
regroup the integrands in an appropriate way. In fact, by taking the
corresponding suprema, one can show that there exist $h_1 = h_1 (h_0)
>0$ and $h_2 = h_2 (h_1) >0$, which do not depend on $\beta, q$ and $r$,
such that
\begin{equation} \label{eq10}
\frac{h_0 ^{|\beta| + k}}{\beta!^{s} k!^{t}}
\langle x \rangle^{k} |\partial_x^{\beta} \big({\mathcal M}_{\phi}
\Phi (x)\big)| \lesssim
\frac{h_0 ^{|\beta| + k}}{\beta!^{s} k!^{t}} \sum_{|r| + |q|
\leq k} I
\end{equation}
$$
\lesssim \sup  h_1 ^{|\beta| + |r| + |q|} \frac{\left ( a^{|q|} +
\frac{1}{a^{|\beta| +1}} \right) |\partial_{\xi}^r \hat{\Phi} (-\xi,
a)|}{\beta!^{s -\rho_1} q!^{t - \rho_2} r!^{t}}
\frac{|a\xi|^{|\beta|} |\hat{\phi}^{(q)} (a \xi)|}{\beta!^{\rho_1}
q!^{ \rho_2}}
$$
$$
\lesssim \, p^{t, \tau, t - \rho_2, s - \rho_1}_{0, h_2}
({\mathcal F}_1 \Phi) \, p^{\rho_1, \rho_2}_{h_2} (\phi),
$$
where the supremum is taken over $ \xi, a, \beta, r$ and $q$ (we
have also used $ a^{|q|-|\beta|-1} \leq \left ( a^{|q|} +
\frac{1}{a^{|\beta| +1}} \right),$ $a>0$).

By Remark \ref{h constants} for $h_2 = h$, there exists $h_0 > 0 $
such that
$$
\sup_{\beta \in  \mathbb{N}^n, k \in  \mathbb{N}} \frac{h_0
^{|\beta| + k}}{\beta!^{s} k!^{t}}
 \sum_{|r| + |q| \leq k} I \, \lesssim \,
p^{t, \tau, t - \rho_2, s - \rho_1}_{0, h} ({\mathcal F}_1
\Phi) \, p^{\rho_1, \rho_2}_{h} (\phi) ,
$$
which implies \eqref{eq9}.

\par

b) Here we bound (\ref{eq10}) by
$$
\sup  h_3 ^{|\beta| + |r| + |q|} \frac{\left ( a^{|q|} + \frac{1}{a}
\right) |\xi|^{|\beta|} |\partial_{\xi}^r \hat{\Phi} (-\xi,
a)|}{\beta!^{s} q!^{t - \rho_2} r!^{t}}
\frac{|\hat{\phi}^{(q)} (a\xi)|}{ q!^{ \rho_2}}
$$
$$
\lesssim \, p^{t, s, t - \rho_2, \tau}_{0, h_4} ({\mathcal
F}_1 \Phi) \, p^{\rho_1, \rho_2}_{ h_4} (\phi),
$$
for some $ h_3 = h_3 (h_0),$ and $ h_4 = h_4 (h_3),$ where the
supremum is taken over $ \xi, a, \beta, r$ and $q$. Once again by Remark
\ref{h constants} for $h_4 = h$, it follows that there exists $h_0 > 0
$ such that
$$
\sup_{\beta \in  \mathbb{N}^n, k \in  \mathbb{N}} \frac{h_0
^{|\beta| + k}}{\beta!^{s} k!^{t}} \sum_{|r| + |q| \leq k}  I \,
\lesssim \, p^{t, s, t - \rho_2, \tau}_{0, h} ({\mathcal
F}_1 \Phi) \, p^{\rho_1, \rho_2}_{ h} (\phi) < \infty,
$$
which completes the proof.

\section*{Acknowledgement}

S. Pilipovi\'{c} and N. Teofanov are supported by the Ministry of
Education, Science and Technological Development of the Republic of
Serbia through Project 174024. D. Raki\'{c} is supported by the
Ministry of Education, Science and Technological Development of the
Republic of Serbia through Project III44006 and by PSNTR through
Project 114-451-2167. J. Vindas acknowledges support by Ghent University, through the BOF-grant 01N01014.


\begin{thebibliography}{99}


\bibitem{BL} J.~L. Bona,  Y.~A. Li,
\textit{Decay and analyticity of solitary waves},
J. Math. Pures Appl. (9) \textbf{76} (1997), 377--430.

\bibitem{BMT} R.~W. Braun, R. Meise, B.~A. Taylor,
\textit{Ultra-differentiable functions and Fourier analysis}, Results Math. \textbf{17} (3-4),
 (1990) 206--237.

\bibitem{CGR-1} M. Cappiello, T. Gramchev, L. Rodino,
\textit{Sub-exponential decay and uniform holomorphic extensions for semilinear pseudodifferential equations}, Comm. Partial Differential Equations \textbf{35}  (2010), 846--877.

\bibitem{CGR-2} M. Cappiello, T. Gramchev, L. Rodino,
\textit{Entire extensions and exponential decay for semilinear elliptic equations},
J. Anal. Math. \textbf{111} (2010), 339--367.

\bibitem{CKP} R. D. Carmichael, A. Kami\'{n}ski, S. Pilipovi\'c,
\textit{Boundary values and convolution in ultradistribution spaces,} Series on analysis, applications and computation, Vol. 1, World Scientific Publishing Company Pte. Ltd., Hackensack, NJ, 2007.


\bibitem{chchki96}
J. Chung, S.-Y. Chung, D. Kim, \textit{{C}haracterizations
of the Gelfand-Shilov spaces via Fourier transforms}, Proc.
Am. Math. Soc.  \textbf{124} (1996), 2101--2108.

\bibitem{CPRT} E. Cordero, S. Pilipovi\'{c}, L. Rodino, N. Teofanov,
\textit{Quasianalytic Gelfand-Shilov spaces with application to localization operators},
Rocky Mountain J. Math. \textbf{40} (2010), 1123--1147.


\bibitem{Dau} I. Daubechies, \textit{Ten lectures on wavelets},
SIAM, Philadelphia, Pennsylvania, 1992.


\bibitem{DK}
D.~Donoho, G.~Kutyniok,
\textit{Microlocal analysis of the geometric separation problem},
Comm. Pure Appl. Math. \textbf{66} (2013), 1--47.


\bibitem{DzH} J. Dziuba\'nski, E. Hern\'andez,
\textit{Band-limited wavelets with subexponential decay} Canad.
Math. Bull. \textbf{41} (1998), 398--403.

\bibitem{FFV} J. Fell, H. F\"uhr, F. Voigtlaender, 
\textit{Resolution of the wavefront set using general continuous wavelet transforms}, preprint (arXiv:1412.7158v1).

\bibitem{GS} I. M. {G}elfand, G. E. {S}hilov, \textit{Generalized
functions, Vols. II and III}, Academic Press, 1967.

\bibitem{GaRu} C. Garetto, M. Ruzhansky, \textit{Wave equation for sums of squares on compact lie groups}, .
J. Differential Equations \textbf{258} (2015), 4324--4347.

\bibitem{GZ} K. Gr\"{o}chenig, G. Zimmermann,
\textit{Spaces of test functions via the STFT},
 J. Funct. Spaces Appl. \textbf{2}  (2004), 25--53.

\bibitem{KL} P. Grohs, \textit{
Shearlets and microlocal analysis,} in: Shearlets: Multiscale Analysis for Multivariate Data, pp. 39?67. Appl. Numer. Harmon. Anal., Birkhäuser/Springer, New York, 2012.


\bibitem{HW} E. Hern\'andez, G. Weiss, \textit{A first course on wavelets}, CRC Press, Boca Raton, 1996.

\bibitem{hol1} M. Holschneider, \textit{Wavelets. An analysis tool,} The Clarendon Press, Oxford University Press, New York, 1995.

\bibitem{hol99}M. Holschneider, \emph{Some directional microlocal classes defined using wavelet transforms,} Spline functions and the theory of wavelets (Montreal, PQ, 1996), 115--133, CRM Proc. Lecture Notes, 18, Amer. Math. Soc., Providence, RI, 1999.

\bibitem{Horm}  L. H{\"o}rmander \textit{The Analysis of Linear
Partial Differential Operators I},
Springer-Verlag, Berlin, 1983, 1990.

\bibitem{K} H. Komatsu, \textit{Ultradistributions I, structure
theorems and  a characterization}, J. Fac. Sci. Univ. Tokyo Sect. IA  \textbf{20} (1973) 25--105.

\bibitem{K2} H. Komatsu,
\textit{Linear ordinary differential equations with Gevrey coefficients,}
J. Differential Equations \textbf{45} (1982), 272--306.

\bibitem{jaffard-m} S. Jaffard, Y. Meyer, \textit{Wavelet methods for pointwise
regularity and local oscillations of functions}, Mem. Amer. Math. Soc. \textbf{123}, no. 587, 1996.


\bibitem{LO} O. Liess, Y. Okada,
\textit{Ultra-differentiable classes and intersection theorems}
Math. Nachr. \textbf{287}  (2014) 638--665.

\bibitem{Mallat} S. Mallat, {\em A wavelet tour of signal processing},
Academic Press, London, 1999.

\bibitem{M}  Y. Meyer, \textit{Wavelets and operators}, Cambridge University Press, Cambridge, 1992.

\bibitem{NR} F. Nicola, L. Rodino, \textit{Global Pseudo-differential
calculus on Euclidean spaces}, Pseudo-Differential Operators. Theory and Applications, \textbf{4}, {B}irkh{\"a}user Verlag, Basel, 2010.

\bibitem{pa06} R. S. Pathak, S. K. Singh, \textit{The wavelet transform
on spaces of type $S$}, {P}roc. {R}oy. {S}oc. {E}dinburgh {S}ect. A
\textbf{136} (2006), 837--850.

\bibitem{pa09} R. S. {P}athak, \textit{The wavelet transform}, Atlantic Press/World Scientific, Paris, 2009.

\bibitem{PRV} S. Pilipovi\'{c}, D. Raki\'{c},  J. Vindas,
\textit{New classes of weighted H\"{o}lder-Zygmund spaces and the
wavelet transform},  J. Funct. Spaces Appl. Vol. 2012 (2012), Art. ID 815475, 18 pages.

\bibitem{vindas-pilipovic} S. Pilipovi\'{c},  J. Vindas, \textit{Multidimensional Tauberian
theorems for wavelet and non-wavelet transforms,} preprint (arXiv:1012.5090v2).

\bibitem{vindas-pilipovic-2}  S. Pilipovi\'{c},  J. Vindas,
 \textit{Multidimensional Tauberian theorems for vector-valued distributions,}
Publ. Inst. Math. (Beograd) (N.S.) \textbf{95} (2014), 1--28.

\bibitem{Prangoski} B. Prangoski, 
\textit{Pseudodifferential operators of infinite order in spaces of tempered ultradistributions,}
J. Pseudo-Differ. Oper. Appl. \textbf{4} (2013), 495--549. 

\bibitem{PVul} S. Pilipovi\'c, M. Vuleti\'c,
 \textit{Characterization of wave front sets by wavelet transforms,}
 Tohoku Math. J. \textbf{58} (2006), no. 3, 369--391.

\bibitem{RS} A. Rainer, G. Schindl,
\textit{Composition in ultradifferentiable classes},  Studia Math. \textbf{224} (2014), 97--131.

\bibitem{RT} D. Raki\'c, N. Teofanov,
\textit{Progressive Gelfand-Shilov spaces and wavelet transforms}, J. Funct. Spaces Appl. \textbf{Vol. 2012} (2012), Article ID 951819, 19 pages.

\bibitem{R} L. Rodino, \textit{Linear partial differential operators in Gevrey spaces},
World Scientific, Singapore, 1993.

\bibitem{Samko} S. G. Samko, \textit{Hypersingular integrals and their applications}, Taylor and Francis, New
York, 2002.



\bibitem{Toft-1} J. Toft,
\textit{The Bargmann transform on modulation and Gelfand-Shilov spaces,
with applications to Toeplitz and pseudo-differential operators},
J. Pseudo-Differ. Oper. Appl. \textbf{3} (2012), 145--227.

\bibitem{Toft-2} J. Toft,
\textit{Multiplication properties in Gelfand-Shilov pseudo-differential calculus},
in: Pseudo-differential operators, generalized functions and asymptotics, pp. 117-172,
Oper. Theory Adv. Appl. \textbf{231}, Birkh\"auser/Springer Basel AG, Basel, 2013.

\bibitem{VPR} J. Vindas, S. Pilipovi\'c, D. Raki\'c,
\textit{Tauberian theorems for the wavelet transform}, J. Fourier
Anal. Appl. \textbf{17}  (2011), 65--95.



\end{thebibliography}
\end{document}